\documentclass[11pt,a4paper]{article}
\usepackage{amssymb, latexsym, amsmath, amsthm}
\usepackage{amsfonts,amsmath,amsthm,amssymb}
\usepackage[english]{babel}
\usepackage{a4wide}
\usepackage{color}
\usepackage{amsfonts}

\newtheorem{thm}{Theorem}[section]
\newtheorem{lemma}[thm]{Lemma}
\newtheorem{prop}[thm]{Proposition}
\newtheorem{defn}[thm]{Definition}
\newtheorem{cor}[thm]{Corollary}
\newtheorem{remark}[thm]{Remark}
\newtheorem{remarks}[thm]{Remarks}
\newtheorem{ex}[thm]{Example}

\theoremstyle{definition}

 \newcommand{\T}{A[y_1;\sigma_1, d_1][y_2; \sigma_2, d_2]}
\newcommand{\A}[1]{{#1}_P[y_1,y_2;\sigma,\delta,\tau]}
\newcommand{\si}{\sigma}
\newcommand{\de}{\delta}

\newcommand{\Z}{\mathbb{Z}}

\usepackage{color}

\begin{document}
 \date{}

\title{Double Ore Extensions versus Iterated Ore Extensions}
\author{Paula A. A. B. Carvalho$^\dagger$, Samuel A.\ Lopes$^\dagger$ and Jerzy Matczuk$^\ast$ \\
\\
$^\dagger$ Departamento de Matem\'atica Pura\\ Faculdade de Ci\^encias da Universidade do Porto \\
R.Campo Alegre 687, 4169-007 Porto, Portugal\\
pbcarval@fc.up.pt; slopes@fc.up.pt \\
\\
$^\ast$ Institute of Mathematics, Warsaw University,\\
 Banacha 2, 02-097 Warsaw, Poland\\
  jmatczuk@mimuw.edu.pl          }

\maketitle

\begin{abstract}
\noindent Motivated by the construction of new examples of
Artin-Schelter regular algebras of global dimension four,  J.J.Zhang and J.Zhang (2008) 
introduced   an algebra extension $A_P[y_1,y_2;\sigma,\delta,\tau ]$ of $A$, which they  called a   double Ore extension.
This construction   seems to be similar to that of a  two-step
 iterated Ore extension over $A$.  The aim of this paper is to  describe those  double
Ore extensions which can be presented as iterated Ore extensions of
the form $A[y_1;\sigma_1, \delta_1][y_2;\sigma_2, \delta_2]$. We also give  partial answers  to some questions posed in Zhang and Zhang (2008).
\end{abstract}

\noindent {\it Keywords:} double Ore extensions, iterated Ore extensions.
\\
\\{\it 2000 Mathematics Subject Classification:} 16S36, 16S38.

\section*{Introduction}


In 2008, J.J. Zhang and J. Zhang introduced 
a new
construction for extending a given algebra $A$,  by simultaneously
adjoining two generators, $y_1$ and $y_2$.  This construction
resembles that of an Ore extension, and it was indeed called a
double Ore extension (or double extension, for short).
It should be noted that  there are no inclusions between  the classes of all double extensions of an algebra $A$ and of all length two iterated Ore extensions of $A$.
The aim of this paper is to describe the common part of these two classes of extensions of  $A$.

In Section~\ref{S:doe} we parallel the constructions of double
extensions and Ore extensions, taking the opportunity to correct
some typos which occurred in Zhang and Zhang (2008, p.~2674)
and again in Zhang and Zhang (2009, p.~379),
concerning the relations that the data of a double extension must
satisfy. In Section~\ref{S:ispr} we present our main results,
Theorems \ref{DOasO} and \ref{y2y1 version}, which offer necessary
and sufficient conditions for a double extension
$A_P[y_1,y_2;\sigma,\delta,\tau ]$ to be presented as iterated Ore
extensions of the form $A[y_1;\si_1,\de_1][y_2;\si_2,\de_2]$ or
$A[y_2;\si_2,\de_2][y_1;\si_1\de_1]$. These, along with  Lemma
\ref{diagonalizavel}, give necessary conditions for a double
extension $A_P[y_1,y_2;\sigma,\delta,\tau]$ to be presented as an
iterated Ore extension $A[x_1;\si_1,\de_1][x_2;\si_2,\de_2]$, with
$x_1$ and $x_2$ a basis of the vector space spanned by $y_1$ and
$y_2$.

In Zhang and Zhang (2009), the authors pursue the study of
Artin-Schelter regular algebras of global dimension four, by
classifying certain types of double extensions and establishing
some of their properties. So as to simplify their task, they
develop criteria for a double extension, of the type they
considered, to be an iterated Ore extension. This is obtained in
Zhang and Zhang (2008, Proposition 3.6), which is a special case
of Theorems \ref{DOasO} and \ref{y2y1 version} below.

We conclude with some applications and give  partial answers to
some questions posed in Zhang and Zhang (2008).


\section{Double Ore Extensions}\label{S:doe}

Throughout this paper, $K$ denotes a field of arbitrary
characteristic and $K^*$ is its multiplicative group of units.
For a $K$-algebra $B$, the  algebra  of $n$ by $m$ matrices with entries in $B$
will be denoted by $M_{n\times m}(B)$.

Let $A$ be a subalgebra of a $K$-algebra $R$ and $x\in R$ be such that $R$
is a free left $A$-module with basis $\{x^i\}_{i=0}^\infty$ and
$xA\subseteq Ax+A$. Then, for any $ a\in A$, there exist $\si(a),
 d(a)\in A$ such that $xa=\si(a)x+ d (a)$. It is well known
(cf. Cohn (1971)) that the above conditions imply that $\si$ is an
endomorphism of $A$ and $d$ is a $\si$-derivation of $A$, i.e.,
$d$ is a $K$-linear  map such that $d(ab)=\si(a) d(b)+d(a)b$, for
all $a,b\in A$. Conversely, if an endomorphism $\si$ and a
$\si$-derivation $d$ of a $K$-algebra $A$ are given, then the
multiplication in $A$ and the condition $xa=\si(a)x+d(a)$ induce a
structure of an associative $K$-algebra on the free left
$A$-module with  basis $\{x^i\}_{i=0}^\infty$. This extension is
called an Ore extension and is denoted by $A[x;\si,d]$. One can
easily check that the Ore extension $A[x;\si,d]$ is a free right
$A$-module with  basis $\{x^i\}_{i=0}^\infty$ if and only if $\si$
is an automorphism of $A$  if and only if $\si$ is injective and
$xA+A=Ax+A$.

We will now recall  the definition of a double  extension, as given in
Zhang and Zhang (2008).

\begin{defn}Let $A$ be a subalgebra of a $K$-algebra $B$. Then:
\begin{enumerate}
\item $B$ is called a right double extension of $A$ if:
\begin{itemize}
 \item[(i)] $B$ is generated by $A$ and two new variables $y_1$ and $y_2$;
 \item[(ii)] $y_1$ and $y_2$ satisfy the relation
 \begin{equation}
 y_2y_1=p_{12}y_1y_2+p_{11}y_1^2+\tau_1 y_1+\tau_2 y_2+\tau_0, \label{def:DOE1}
 \end{equation}
 for some $p_{12},p_{11}\in K$ and $\tau_1, \tau_2, \tau_0\in
 A$;
 \item[(iii)] $B$ is a free left $A$-module with
 basis $\{y_1^{i}y_2^{j}: i, j\geq 0\}$;
 \item[(iv)] $y_1A+y_2A+A\subseteq Ay_1+Ay_2+A$.
 \end{itemize}
 \item A right  double extension $B$ of $A$ is called a double
 extension if:
 \begin{itemize}
 \item[(i)] $p_{12}\ne 0$;
 \item[(ii)] $B$ is a free right $A$-module with basis
 $\{y_2^{i}y_1^{j}: i, j\geq 0\}$;
 \item[(iii)] $y_1A+y_2A+A= Ay_1+Ay_2+A$.
 \end{itemize}
\end{enumerate}
\end{defn}

\bigskip\noindent
Condition (a)$(iv)$ from the above definition is equivalent to the existence of two maps
 $$\sigma=\left[\begin{array}{cc} \sigma_{11}  &
 \sigma_{12}  \\ \sigma_{21}  & \sigma_{22}  \end{array}\right]\colon A\rightarrow M_{2\times
 2}(A)\quad
\mbox{and}\quad \de=\left[\begin{array}{c} \de_{1}\\  \de_{2}
\end{array}\right]\colon A\rightarrow M_{2\times 1}(A),$$
such that
 \begin{equation}
 \left[\begin{array}{c} y_1 \\ y_2 \end{array}\right]a=\sigma(a)
 \left[\begin{array}{c} y_1 \\ y_2 \end{array}\right]+\delta(a),\;
  \mbox{for all}\;\; a\in A. \label{def:DOE2}\end{equation}
 In case $B$ is a right double extension of $A$,
  we will write $B=A_P[y_1,y_2;\sigma,\delta,\tau]$,
  where $P=\{p_{12},p_{11}\}\subseteq K$,  $\tau=\{\tau_0,\tau_1,\tau_2\}\subseteq A$
  and $\si, \de$ are as above.
   The set $P$ is called a \emph{parameter} and $\tau$ a \emph{tail}.

Suppose $A_P[y_1,y_2;\sigma,\delta,\tau]$ is a right double extension. Then,
it is clear  that all maps  $\sigma_{ij}$ and $\de_i$
  are   endomorphisms of the $K$-vector space $A$. In Zhang and Zhang (2008, Lemma 1.7) the authors showed
 that  $\si$ must be a homomorphism of algebras and $\de$  a
 $\si$-derivation, in the sense that $\de$ is $K$-linear and
 satisfies $\de(ab)=\si(a)\de(b)+\de(a)b$, for all $a,b\in A$. One can easily
  check that, if the matrix $ \left[\begin{array}{cc} \sigma_{11}  &
 \sigma_{12}  \\ \sigma_{21}  & \sigma_{22}  \end{array}\right]$
   is triangular, then both $\si_{11}$ and
 $\si_{22}$ are algebra homomorphisms.

It is known that a map $d\colon A\rightarrow A $ is a
$\si$-derivation, where $\si$ is an endomorphism of $A$, if and only if the
map from $A$ to $M_{2\times 2}(A)$ sending $a$ onto
  $\left[\begin{array}{cc}
    \si (a) & d(a)\\
    0 & a
  \end{array}\right]$
  is a homomorphism of algebras. This, in particular, implies
  that for any algebra endomorphism $\si$ of $K[x]$  and any polynomial $w\in
  K[x]$, there exists a (unique) $\si$-derivation $d$ of $K[x]$ such that
  $d(x)=w$.

Let us observe that if $\tau\subseteq K$, then the subalgebra of
$\A{A}$ generated by $y_1$ and $y_2$ is the double extension $K_P[y_1,y_2;\sigma',\delta',\tau]$, where $\si'=\si \mid_{K}$ is the canonical embedding of
$K$ in $M_{2\times 2}(K)$ and $\de'=\de \mid_{K}=0$ is the zero map. The
following proposition shows that the latter is always an iterated
Ore extension.

\begin{prop}
\label{rem1}  Let $B=K_P[y_1,y_2;\sigma',\delta',\tau]$. Then $B\simeq K[x_1][x_2;\si_2,d_2]$ is an
iterated Ore extension, where $\si_2$ is the algebra endomorphism
of the polynomial ring $K[x_1]$ defined by
$\si_2(x_1)=p_{12}x_1+\tau_2$ and $d_2$ is the
$\si_2$-derivation  of $K[x_1]$ given by
$d_2(x_1)=p_{11}x_1^2+\tau_1x_1+\tau_0$. Moreover, $B$ is a double
extension of $K$ if and only if $p_{12}\ne 0$.
\end{prop}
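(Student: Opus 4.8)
The plan is to exhibit an explicit isomorphism $B \to K[x_1][x_2;\si_2,d_2]$ and check it respects the defining relations on both sides. Since $B = K_P[y_1,y_2;\si',\delta',\tau]$ has $\si'$ the canonical embedding and $\delta'=0$, the only relation among $y_1,y_2$ is the single equation \eqref{def:DOE1}, which here reads $y_2y_1 = p_{12}y_1y_2 + p_{11}y_1^2 + \tau_1 y_1 + \tau_2 y_2 + \tau_0$ with all $p$'s in $K$ and all $\tau$'s in $K$; moreover $B$ is a free left $K$-module on $\{y_1^i y_2^j\}$. First I would send $x_1 \mapsto y_1$ and $x_2 \mapsto y_2$, so that $K[x_1] \mapsto K[y_1]$ (a polynomial ring, since $\{y_1^i\}$ is part of a $K$-basis of $B$, hence $K$-linearly independent). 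The content of the proposition is then that the assignment $x_2 \mapsto y_2$ extends to a ring map from $K[x_1][x_2;\si_2,d_2]$, i.e.\ that $y_2$ and $y_1$ satisfy the Ore relation $x_2 x_1 = \si_2(x_1)x_2 + d_2(x_1)$ with the stated $\si_2,d_2$; but $\si_2(x_1)x_2 + d_2(x_1) = (p_{12}y_1 + \tau_2)y_2 + p_{11}y_1^2 + \tau_1 y_1 + \tau_0$, which is exactly the right-hand side of \eqref{def:DOE1} rewritten, so the relation holds verbatim.

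Next I would verify $\si_2$ and $d_2$ are legitimate Ore data on $K[x_1]$. That $\si_2$ extends to an algebra endomorphism of $K[x_1]$ is automatic since $K[x_1]$ is free commutative on one generator and $p_{12}x_1+\tau_2$ is an arbitrary element. That a $\si_2$-derivation $d_2$ with $d_2(x_1) = p_{11}x_1^2 + \tau_1 x_1 + \tau_0$ exists and is unique is precisely the remark made in the excerpt just before the proposition (the matrix-homomorphism characterization of $\si$-derivations of $K[x]$). So $K[x_1][x_2;\si_2,d_2]$ is a well-defined iterated Ore extension, free as a left $K[x_1]$-module on $\{x_2^j\}$, hence free as a left $K$-module on $\{x_1^i x_2^j\}$.

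With both sides constructed, I would define $\varphi\colon K[x_1][x_2;\si_2,d_2] \to B$ by $x_1 \mapsto y_1$, $x_2 \mapsto y_2$; it is a well-defined algebra homomorphism because the target satisfies the sole defining relation, and it is surjective since $y_1,y_2$ generate $B$. For injectivity I would compare $K$-bases: $\varphi$ sends the $K$-basis $\{x_1^i x_2^j\}$ onto $\{y_1^i y_2^j\}$, which is a $K$-basis of $B$ by condition (a)(iii) of the definition of a right double extension; a surjection carrying a basis to a basis is an isomorphism. Finally, for the last sentence: by Proposition~\ref{rem1}'s construction $B$ is a double extension of $K$ exactly when condition (b)(i), $p_{12}\ne 0$, holds, since (b)(ii) and (b)(iii) are then automatic — when $p_{12}\ne 0$, $\si_2$ is an automorphism of $K[x_1]$ (it sends $x_1$ to a degree-one polynomial with unit leading coefficient), so the iterated Ore extension is also free as a right $K[x_1]$-module on $\{x_2^j\}$, which translates back through $\varphi$ into the right-module freeness and the equality $y_1K+y_2K+K = Ky_1+Ky_2+K$; conversely if $p_{12}=0$ then (b)(i) fails outright.

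I do not anticipate a serious obstacle here — this is essentially an unwinding of definitions. The one point requiring a little care is the direction $p_{12}\ne 0 \Rightarrow$ double extension: one must confirm that $\si_2$ being an automorphism of $K[x_1]$ really does give the right-handed freeness and the spanning equality in (b)(ii)–(b)(iii), which follows from the characterization recalled in the excerpt that $A[x;\si,d]$ is free as a right $A$-module iff $\si$ is an automorphism iff $xA+A=Ax+A$, applied with $A=K[x_1]$.
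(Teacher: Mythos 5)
Your proposal is correct and follows essentially the same route as the paper's proof: build $K[x_1][x_2;\si_2,d_2]$ via the remark on existence and uniqueness of the $\si_2$-derivation, map $x_i\mapsto y_i$ using the single quadratic relation, conclude isomorphism by matching the $K$-bases $\{x_1^ix_2^j\}$ and $\{y_1^iy_2^j\}$, and for $p_{12}\neq 0$ use that $\si_2$ is an automorphism of $K[x_1]$ to get the right-module basis $\{y_2^iy_1^j\}$ and hence the double extension property.
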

\begin{proof} The preceding remarks
guarantee the existence (and uniqueness) of the $\si_2$-derivation $d_2$. Thus, the iterated Ore
extension  $K[x_1][x_2;\si_2,d_2]$ can be considered. It is routine to  check that
$x_2x_1=p_{12}x_1x_2+p_{11}x_1^2+\tau_1 x_1+\tau_2 x_2+\tau_0$ holds in
$K[x_1][x_2;\si_2,d_2]$. This means that there is an algebra
homomorphism from $K[x_1][x_2;\si_2,\de_2]$ onto $B$ mapping
$x_i$ to $y_i$, $i=1,2$. Since $\{x_1^i x_2^j\mid i,j\geq 0\}$ and $\{y_1^iy_2^j\mid i,j\geq 0\}$ are
 bases of $K[x_1][x_2;\si_2,d_2]$ and $B$ over $K$, respectively, the homomorphism is an isomorphism.

If $p_{12}\ne 0$, then $\si_2$ is an automorphism of $K[x_1]$.
This implies that the set $\{x_2^ix_1^j\mid i,j\geq 0\}$ is a
basis of $K[x_1][x_2;\si_2,d_2]$ as a (right) $K$-vector space, and thus the same is true for $\{y_2^i y_1^j\mid i,j\geq 0\}$
and $B$.
Hence, $B$ is a double extension.
\end{proof}

\begin{remark}
 Let $C=K[y_1][y_2;\si_2,d_2]$ be as in Proposition \ref{rem1}. Then, for any $K$-algebra $A$, we have:
$$A\otimes_KC=A_P[y_1,y_2;\sigma,\delta,\tau],$$
where $\si=\left[\begin{array}{cc}
    id_A & 0 \\
    0 & id_A
  \end{array}\right]
$,  $\de=
  \left[\begin{array}{c}
    0   \\
    0
  \end{array}\right]$, $P=\{p_{12}, p_{11}\}$ and $\tau=\{\tau_0,\tau_1,\tau_2\}$ are
as in Proposition~\ref{rem1}.
\end{remark}

\begin{prop}\label{existance}
 Given  $P=\{p_{12},p_{11}\}\subseteq K$,  $\tau=\{\tau_0,\tau_1,\tau_2\}\subseteq
 K$,   $\sigma\colon A\rightarrow M_{2\times 2}(A)$ an algebra
 homomorphism
and $\de\colon A\rightarrow M_{2\times 1}(A)$ a $\si$-derivation,
let $C=K[y_1][y_2;\si_2,d_2]$ be as in Proposition~\ref{rem1}. Then, the following conditions are equivalent:
\begin{enumerate}
 \item the right double extension $\A{A}$ exists;
 \item one can extend the multiplications from $A$ and $C$ to
a multiplication in the vector space $A\otimes_KC$,
 satisfying $\left[\begin{array}{c} y_1 \\ y_2
\end{array}\right]a=\sigma(a)
 \left[\begin{array}{c} y_1 \\ y_2
  \end{array}\right]+\delta(a)$, for all $a\in A$.
\end{enumerate}
\end{prop}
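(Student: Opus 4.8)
The plan is to turn the left $A$-module isomorphism $A\otimes_K C\cong B$ into a dictionary between associative algebra structures on $A\otimes_K C$ and right double extensions carrying the prescribed data $(P,\tau,\si,\de)$. The crucial observation, which I would record first, is that since the tail $\tau$ lies in $K$, the iterated Ore extension $C=K[y_1][y_2;\si_2,d_2]$ is exactly the ``constant-coefficient'' piece $K_P[y_1,y_2;\si',\de',\tau]$ sitting inside every right double extension $\A{A}$ --- this is the remark preceding Proposition~\ref{rem1} combined with Proposition~\ref{rem1} itself.

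For the implication $(a)\Rightarrow(b)$, I would assume $B=\A{A}$ exists and let $B_0\subseteq B$ be the $K$-subalgebra generated by $y_1,y_2$. By the observation above $B_0\cong C$; moreover $\{y_1^iy_2^j\}$ is part of a left $A$-basis of $B$, hence $K$-linearly independent, and by \eqref{def:DOE1} (all of whose coefficients are scalars here) it spans $B_0$, so it is a $K$-basis of $B_0\cong C$. Therefore $a\otimes c\mapsto ac$ is a $K$-linear isomorphism $A\otimes_K C\to B$ restricting to the identity on $A\otimes 1$ and on $1\otimes C$, and transporting the product of $B$ along it yields the desired multiplication on $A\otimes_K C$; it extends those of $A$ and $C$, and it satisfies the relation in (b) because \eqref{def:DOE2} holds in $B$.

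For $(b)\Rightarrow(a)$, I would let $B$ denote $A\otimes_K C$ with the given multiplication --- which must be read as an associative, unital $K$-algebra structure making $A\otimes 1$ and $1\otimes C$ subalgebras and the left $A$-action on $A\otimes_K C$ the canonical one --- and then verify the four defining conditions: (a)(iii) is built into the construction; (a)(i) holds since $A$ and $C$ generate $B$ while $y_1,y_2$ generate $C$ over $K$; (a)(ii) holds because \eqref{def:DOE1}, with the prescribed $p_{12},p_{11}\in K$ and $\tau_0,\tau_1,\tau_2\in K\subseteq A$, is precisely the defining relation of $C$ forced by the choice of $\si_2$ and $d_2$ in Proposition~\ref{rem1}; and (a)(iv) is the hypothesis in (b). Comparing the relation in (b) against the left $A$-basis $\{y_1^iy_2^j\}$ then shows that the structure maps of this right double extension are uniquely $\si$ and $\de$ and that its parameter and tail are the given $P$ and $\tau$, so $B=\A{A}$.

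I do not expect a genuine obstacle: the proposition is essentially a reformulation of the existence question, and both implications are bookkeeping once the statement is read correctly. The one delicate point is exactly that reading of ``a multiplication extending those of $A$ and $C$'' --- under a weaker interpretation $B$ need not be an algebra at all --- and, relatedly, the fact that in $(b)\Rightarrow(a)$ the data recovered from $B$ genuinely coincide with the given $(P,\tau,\si,\de)$, which rests on uniqueness of coefficients in the free left $A$-module $A\otimes_K C$ together with the identification of $1\otimes C$ with $K[y_1][y_2;\si_2,d_2]$.
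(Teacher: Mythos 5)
Your proposal is correct and takes essentially the same approach as the paper: the paper's own proof is only a sketch that invokes the remark preceding Proposition~\ref{rem1}, Proposition~\ref{rem1} itself and the definition of a right double extension, and your argument simply fills in that bookkeeping (identifying the subalgebra generated by $y_1,y_2$ with $C$, transporting the multiplication along the left $A$-module isomorphism $A\otimes_K C\cong B$, and checking the four defining conditions in the converse direction). Your explicit remark on how condition (b) must be read, with the canonical left $A$-action, matches the paper's intended interpretation.
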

\begin{proof}
 The remark just before Proposition \ref{rem1} implies that for any sets
 $P$ and $\tau$ of data, with $\tau\subseteq K$, the iterated Ore extension $C$ exists.
 Now it is easy to complete the proof by using Proposition~\ref{rem1} and the
 definition of a right double extension.
\end{proof}

Using Bergman's diamond lemma (Bergman (1978)), Zhang and Zhang gave
a universal construction for a right double extension.
Unfortunately, there are three small typos in  the relations
(R3.4)--(R3.6) appearing in Zhang and Zhang (2008, p.\ 2674).
These come originally from analogous typos in Zhang and Zhang
(2008, Lemma 1.10(a) and Equation
(E1.10.3)). For the convenience of the reader, we re-write
relations (R3.1)--(R3.6) of Zhang and Zhang (2008) as relations
(\ref{Rel:R31})--(\ref{Rel:R36}) below, with the corrected typos
underlined.

 \begin{prop}\label{Zhang}
 (Zhang and Zhand, 2008, Lemma 1.10, Proposition 1.11)
 Given a  $K$-algebra $A$, let $\sigma$ be a homomorphism from $A$ to
 $M_{2\times 2}(A)$, $\delta$  a $\sigma$-derivation
 from $A$ to $M_{2\times 1}(A)$, $P=\{p_{12},p_{11}\}$  a  set  of
 elements of $K$ and $\tau=\{\tau_0,\tau_1,\tau_2\}$ a set
  of elements of $A$.
 Then, the associative   $K$-algebra $B$  generated by $A$, $y_1$ and $y_2$, subject
 to the relations
(\ref{def:DOE1}) and (\ref{def:DOE2}), is a right double extension if and
only if the maps $\si_{ij}$ and $\rho_{k}$, $i\in\{1,2\}$, $j, k\in\{ 0, 1, 2\}$,
satisfy the six relations (\ref{Rel:R31})--(\ref{Rel:R36}) below, where $\si_{i0}=\de_i$ and $\rho_{k}$ is a \underline{right} multiplication by $\tau_{k}$.
\end{prop}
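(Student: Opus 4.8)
The plan is to put $B$ into the diamond-lemma framework and to read off the six relations as the resolution of a single overlap ambiguity. Fix a $K$-basis $\{a_\lambda\}$ of $A$ (with $1$ among the $a_\lambda$) and work in the free $K$-algebra on the alphabet $\{a_\lambda\}\cup\{y_1,y_2\}$ with the reduction rules: $(\mathrm{M})$ replace $a_\lambda a_\mu$ by its expansion in the basis of $A$; $(\mathrm{S}_i)$, for $i=1,2$, replace $y_ia_\lambda$ by $\si_{i1}(a_\lambda)y_1+\si_{i2}(a_\lambda)y_2+\de_i(a_\lambda)$; and $(\mathrm{T})$ replace $y_2y_1$ by $p_{12}y_1y_2+p_{11}y_1^2+\tau_1y_1+\tau_2y_2+\tau_0$, in each case re-expanding the resulting $A$-coefficients in $\{a_\lambda\}$. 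With the length-lexicographic order on monomials in which every letter of $A$ precedes $y_1$ and $y_1$ precedes $y_2$, each rule is strictly decreasing, so the system is terminating, and its irreducible monomials are precisely the $a_\lambda y_1^iy_2^j$. By Bergman's diamond lemma, these irreducibles form a $K$-basis of $B$ if and only if every overlap ambiguity of the system resolves; and their forming such a basis is exactly the statement that $B$ is a free left $A$-module on $\{y_1^iy_2^j\}$, that is, condition (a)(iii) in the definition of a right double extension --- the conditions (a)(i), (a)(ii), (a)(iv) holding by construction. So $B$ being a right double extension of $A$ is equivalent to the resolvability of all overlap ambiguities of the system.

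Next I would run through the ambiguities. Overlaps of $(\mathrm{M})$ with $(\mathrm{M})$ resolve by associativity of $A$. Overlaps of $(\mathrm{S}_i)$ with $(\mathrm{M})$ occur at words $y_ia_\lambda a_\mu$, and resolve precisely because $\si$ is an algebra homomorphism and $\de$ a $\si$-derivation, which is part of our hypotheses (cf.\ Zhang and Zhang (2008, Lemma~1.7)). No further overlaps occur --- $(\mathrm{T})$ overlaps neither itself nor $(\mathrm{S}_2)$, and the $(\mathrm{S}_i)$ do not overlap one another --- and there are no inclusion ambiguities, as no left-hand side is a subword of another. Thus the only ambiguity carrying content is the overlap of $(\mathrm{T})$ with $(\mathrm{S}_1)$ at the word $y_2y_1a_\lambda$, and the statement reduces to the assertion that this ambiguity resolves if and only if the six relations (\ref{Rel:R31})--(\ref{Rel:R36}) hold.

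Resolving this last ambiguity is the heart of the proof and its principal computational burden. I would reduce $y_2y_1a_\lambda$ along both routes --- first applying $(\mathrm{T})$ and then driving $a_\lambda$ leftwards through each of $p_{12}y_1y_2$, $p_{11}y_1^2$, $\tau_1y_1$, $\tau_2y_2$, $\tau_0$ by repeated use of $(\mathrm{S}_1)$ and $(\mathrm{S}_2)$ (re-applying $(\mathrm{T})$ whenever a $y_2y_1$ reappears), versus first applying $(\mathrm{S}_1)$ and then driving $\si_{11}(a_\lambda)$, $\si_{12}(a_\lambda)$, $\de_1(a_\lambda)$ leftwards past $y_2$ (again re-applying $(\mathrm{T})$ as needed) --- and in each of the two resulting normal forms I would collect the $A$-coefficients of the six irreducible monomials $1,y_1,y_2,y_1^2,y_1y_2,y_2^2$. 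Since $a_\lambda$ ranges over a basis of $A$, equality of the two normal forms is equivalent to six identities between $K$-linear endomorphisms of $A$ built from the $\si_{ij}$, the $\de_i$ and the right multiplications $\rho_k$ by $\tau_k$; careful bookkeeping then matches these with relations (\ref{Rel:R31})--(\ref{Rel:R36}). The delicate points --- and the source of the typos being corrected --- are keeping the left/right placement of the $A$-coefficients straight at every step (the maps $\rho_k$ enter only through terms $(\,\cdots)\tau_ky_\ell$ produced as $a_\lambda$ overtakes the tail, hence are genuinely right, not left, multiplications), and checking that no monomial of $y$-degree exceeding $2$ is ever generated, so that the six listed monomials really do exhaust the coefficients. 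Granting this, the diamond lemma yields confluence --- equivalently, the right double extension property --- if and only if these six identities hold, which is the claim.
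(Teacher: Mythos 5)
The paper does not supply its own proof of this proposition; it is imported from Zhang and Zhang (2008, Lemma~1.10, Proposition~1.11), the paper's contribution being the correction of three typos in (\ref{Rel:R34})--(\ref{Rel:R36}). The paper does record that the cited proof runs through Bergman's diamond lemma, so your argument reconstructs the original (cited) route rather than anything argued in this paper. The remark immediately following the proposition also sketches an alternative that, for $\tau\subseteq K$, avoids the diamond lemma altogether via Proposition~\ref{existance}.

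Your outline itself is sound: the system terminates under the chosen monomial order; $\mathrm{M}$--$\mathrm{M}$ overlaps resolve by associativity of $A$; $\mathrm{S}_i$--$\mathrm{M}$ overlaps resolve precisely because $\sigma$ is an algebra homomorphism and $\delta$ a $\sigma$-derivation, both hypotheses; and $y_2y_1a_\lambda$ is the only remaining overlap, whose resolution --- read off coefficientwise in the six irreducible monomials $1,y_1,y_2,y_1^2,y_1y_2,y_2^2$ and then as identities of $K$-linear maps on $A$ --- yields six identities. Two points should be tightened. First, including $1$ among the $a_\lambda$ while treating $A$-letters as a genuine alphabet leaves $1\cdot y_1^iy_2^j$ and $y_1^iy_2^j$ as distinct irreducible words representing the same element of $B$; the standard fix is to add the rule $1\to\emptyset$ (the empty word), after which the resulting overlaps at $y_i\cdot 1$ resolve because $\sigma(1)$ is the identity matrix and $\delta(1)=0$. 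Second, the ``careful bookkeeping'' you defer is exactly where the corrected typos arose --- the distinction between $\rho_k$ (right multiplication by $\tau_k$) and left multiplication by $\tau_k$ on the various composites --- so a complete write-up cannot leave the coefficient-by-coefficient comparison with (\ref{Rel:R31})--(\ref{Rel:R36}) implicit.
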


\begin{align}
\label{Rel:R31}
\sigma_{21} \sigma_{11}+p_{11}\sigma_{22}  \sigma_{11}=&\  p_{11}\sigma_{11}^{2}+p_{11}^2\sigma_{12}  \sigma_{11}
+p_{12}\sigma_{11}  \sigma_{21} +p_{11}p_{12}\sigma_{12}  \sigma_{21}\\
\label{Rel:R32}
\sigma_{21}\sigma_{12}+p_{12}\sigma_{22}\sigma_{11}=&\  p_{11}\sigma_{11}\sigma_{12}+p_{11}p_{12}\sigma_{12}\sigma_{11}
+p_{12}\sigma_{11}\sigma_{22}+p_{12}^2 \sigma_{12}\sigma_{21}\\
\label{Rel:R33}
\sigma_{22}\sigma_{12}=&\  p_{11}\sigma_{12}^2 +p_{12}\sigma_{12}\sigma_{22}\\
\label{Rel:R34}
\sigma_{20}\sigma_{11} + \sigma_{21}\sigma_{10} +\underline{\rho_{1}\sigma_{22}\sigma_{11}} =&\  p_{11} \left( \sigma_{10}\sigma_{11}+\sigma_{11}\sigma_{10}+\tau_{1}\sigma_{12}\sigma_{11}  \right)\nonumber \\
& +p_{12} \left( \sigma_{10}\sigma_{21}+\sigma_{11}\sigma_{20}+\tau_{1}\sigma_{12}\sigma_{21}  \right)
+\tau_{1}\sigma_{11}  +\tau_{2}\sigma_{21}\\
\label{Rel:R35}
\sigma_{20}\sigma_{12} + \sigma_{22}\sigma_{10} +\underline{\rho_{2}\sigma_{22}\sigma_{11}} =&\  p_{11} \left( \sigma_{10}\sigma_{12}+\sigma_{12}\sigma_{10}+\tau_{2}\sigma_{12}\sigma_{11}  \right)\nonumber \\
& +p_{12} \left( \sigma_{10}\sigma_{22}+\sigma_{12}\sigma_{20}+\tau_{2}\sigma_{12}\sigma_{21}  \right)
+\tau_{1}\sigma_{12}  +\tau_{2}\sigma_{22}\\
\label{Rel:R36}
\sigma_{20}\sigma_{10} + \underline{\rho_{0}\sigma_{22}\sigma_{11}} =&\  p_{11} \left( \sigma_{10}^2
+\tau_{0}\sigma_{12}\sigma_{11}  \right)\nonumber \\ & +p_{12} \left( \sigma_{10}\sigma_{20}+\tau_{0}\sigma_{12}\sigma_{21}  \right)
+\tau_{1}\sigma_{10}  +\tau_{2}\sigma_{20}+\tau_{0}id_A
\end{align}

\begin{remarks}\hfill
\begin{enumerate}
\item[\textup{1.}] Proposition~\ref{existance} can be used to obtain a direct proof of  Proposition \ref{Zhang}, i.e., one which does not use Bergman's diamond lemma, provided that
$\tau=\{\tau_0,\tau_1,\tau_2\}\subseteq K$.
\item[\textup{2.}] Proposition~\ref{Zhang} implies the uniqueness, up to isomorphism, of a right double extension of $A$, with given   $\sigma$, $\delta$, $P$ and $\tau$, provided such an extension exists. Indeed, assume $\overline{B}=\A{A}$ is a right double extension of $A$.
Then, by Zhang and Zhang (2008, Lemmas 1.7 and 1.10(b)), the data
$\sigma$, $\delta$, $P$ and $\tau$ satisfy the conditions of
Proposition~\ref{Zhang}. Let $B$ be as in this proposition. Then,
there is an algebra homomorphism from $B$ to $\overline{B}$ which
restricts to the identity on $A$ and maps $y_{i}\in B$ to the
corresponding element $y_{i}\in \overline{B}$, $i=1, 2$. Since $B$
is a free left $A$-module with basis $\{y_1^{i}y_2^{j}: i, j\geq
0\}$ and the same holds for $\overline{B}$, this map is an
isomorphism, thus proving uniqueness.
\end{enumerate}
\end{remarks}

 As noticed in Zhang and Zhang (2008, Remark 1.4), by choosing a suitable basis of the vector
 space $Ky_1+Ky_2$,  we can prove:

 \begin{lemma}\label{L:newP}
 Let $B=A_P[y_1,y_2;\sigma,\delta,\tau]$ be a right double extension.
 \begin{enumerate}
 \item If $p_{11}\neq 0$ and $p_{12}=1$, then
 $$B\simeq A_{\{1,1\}}\left[\overline{y}_1,\overline{y}_2;\left[\begin{array}{cc} \sigma_{11} &
 p_{11}\sigma_{12}\\ p_{11}^{-1}\sigma_{21} & \sigma_{22}\end{array}\right],
 \left[\begin{array}{c} p_{11}\delta_{1}\\ \delta_{2}\end{array}\right], \overline{\tau}\right]$$
 where $\overline{\tau}=\{p_{11}\tau_0,\tau_1,p_{11}\tau_2\}$, $\overline{y}_1=p_{11}y_1$ and $\overline{y}_2=y_2$.
\item If  $p_{12}\neq 1$, then
 $$B\simeq  A_{\{p_{12},0\}}\left[\overline{y}_1,\overline{y}_2;\left[\begin{array}{cc} \sigma_{11}-q\sigma_{12} & \sigma_{12}\\ \sigma_{21}+q(\sigma_{11}-\sigma_{22})-q^2\sigma_{12} & \sigma_{22}+q\sigma_{12}\end{array}\right],
 \left[\begin{array}{c} \delta_{1}\\ \delta_{2}+q\delta_1\end{array}\right], \overline{\tau} \right]$$
 where $q=\frac{p_{11}}{p_{12}-1}$,  $\overline{\tau}=\{\tau_0,\tau_1-q\tau_2,\tau_2\}$, $\overline{y}_1=y_1$ and $\overline{y}_2=y_2+qy_1$.

 \end{enumerate}
 \end{lemma}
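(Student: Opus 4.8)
The plan is to prove Lemma~\ref{L:newP} by a direct change of variables, exploiting the fact that both parts describe the same underlying algebra $B$ but with respect to a new basis of the two-dimensional subspace $Ky_1 + Ky_2$ of $B$. In part~(a) I set $\overline{y}_1 = p_{11}y_1$, $\overline{y}_2 = y_2$; in part~(b) I set $\overline{y}_1 = y_1$, $\overline{y}_2 = y_2 + q y_1$ with $q = p_{11}/(p_{12}-1)$. In each case $\{\overline{y}_1, \overline{y}_2\}$ is again a basis of $Ky_1+Ky_2$, so $\{\overline{y}_1^{\,i}\overline{y}_2^{\,j} : i,j\geq 0\}$ is still a free left $A$-module basis of $B$ (this follows because passing from $\{y_1^iy_2^j\}$ to $\{\overline{y}_1^{\,i}\overline{y}_2^{\,j}\}$ is a filtered change of basis with respect to the total-degree filtration on $B$, whose associated graded is unchanged). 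Thus the conditions (iii) of Definition~1.1(a) persist automatically, and it only remains to verify conditions (ii) and (iv)—that is, to rewrite the defining relation \eqref{def:DOE1} and the commutation relation \eqref{def:DOE2} in terms of the barred variables and read off the new parameter $\overline{P}$, new $\sigma$-matrix, new $\delta$-column and new tail $\overline{\tau}$.

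For part~(a): from \eqref{def:DOE1}, substituting $y_1 = p_{11}^{-1}\overline{y}_1$ and $y_2 = \overline{y}_2$, I compute $\overline{y}_2\,\overline{y}_1 = p_{11}(y_2 y_1) = p_{11}(p_{12}y_1y_2 + p_{11}y_1^2 + \tau_1 y_1 + \tau_2 y_2 + \tau_0)$ and re-express the right-hand side in the barred variables; with $p_{12}=1$ this yields exactly $\overline{y}_1\overline{y}_2 + \overline{y}_1^{\,2} + \tau_1\overline{y}_1 + p_{11}\tau_2\overline{y}_2 + p_{11}\tau_0$, giving parameter $\{1,1\}$ and tail $\{p_{11}\tau_0,\tau_1,p_{11}\tau_2\}$. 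For the commutation relation, I apply \eqref{def:DOE2} to get $\overline{y}_1 a = p_{11}(y_1 a) = p_{11}(\sigma_{11}(a)y_1 + \sigma_{12}(a)y_2 + \delta_1(a)) = \sigma_{11}(a)\overline{y}_1 + p_{11}\sigma_{12}(a)\overline{y}_2 + p_{11}\delta_1(a)$, and similarly $\overline{y}_2 a = \sigma_{21}(a)y_1 + \sigma_{22}(a)y_2 + \delta_2(a) = p_{11}^{-1}\sigma_{21}(a)\overline{y}_1 + \sigma_{22}(a)\overline{y}_2 + \delta_2(a)$, which exhibits precisely the stated matrix and $\delta$-column. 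Since $B$ with these new data satisfies all the requirements of being a right double extension and the left-module basis condition has already been checked, Proposition~\ref{Zhang} (or rather the uniqueness Remark~2 following it) identifies $B$ with $A_{\{1,1\}}[\overline{y}_1,\overline{y}_2;\ldots]$.

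For part~(b) the computation is the same in spirit but algebraically heavier, since $\overline{y}_2 = y_2 + qy_1$ mixes the two generators. The key identity is that with $q = p_{11}/(p_{12}-1)$ the "$y_1^2$" term is absorbed: writing $y_2 = \overline{y}_2 - q\overline{y}_1$ and $y_1 = \overline{y}_1$ in \eqref{def:DOE1}, expanding $\overline{y}_2\overline{y}_1 = (y_2+qy_1)y_1 = y_2y_1 + qy_1^2$ and collecting, the coefficient of $\overline{y}_1^{\,2}$ becomes $p_{11} + q p_{12} - q = p_{11} + q(p_{12}-1) - 0$… more carefully, $p_{11} + q(p_{12}-1) = p_{11}+p_{11} $—I will need to track this bookkeeping exactly, but the choice of $q$ is precisely the one making the $\overline{y}_1^{\,2}$-coefficient vanish, yielding parameter $\{p_{12},0\}$. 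For the $\sigma$-matrix, I substitute into \eqref{def:DOE2}: $\overline{y}_1 a = y_1 a = \sigma_{11}(a)y_1 + \sigma_{12}(a)y_2 + \delta_1(a)$, and since $y_1 = \overline{y}_1$, $y_2 = \overline{y}_2 - q\overline{y}_1$, this rewrites as $(\sigma_{11}-q\sigma_{12})(a)\overline{y}_1 + \sigma_{12}(a)\overline{y}_2 + \delta_1(a)$; and $\overline{y}_2 a = (y_2+qy_1)a = \sigma_{21}(a)y_1 + \sigma_{22}(a)y_2 + \delta_2(a) + q(\sigma_{11}(a)y_1+\sigma_{12}(a)y_2+\delta_1(a))$, which after substituting $y_1,y_2$ and collecting gives the bottom row $\sigma_{21}+q(\sigma_{11}-\sigma_{22})-q^2\sigma_{12}$ for the $\overline{y}_1$-coefficient, $\sigma_{22}+q\sigma_{12}$ for the $\overline{y}_2$-coefficient, and $\delta_2+q\delta_1$ for the constant term. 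The tail works out to $\{\tau_0,\tau_1-q\tau_2,\tau_2\}$ by the analogous substitution in the scalar terms of \eqref{def:DOE1}.

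The main obstacle is purely the algebraic bookkeeping in part~(b): one must carefully expand the quadratic relation under a non-diagonal substitution and verify that every coefficient matches the stated formulas, in particular confirming that the specific value $q = p_{11}/(p_{12}-1)$ is what kills the $\overline{y}_1^{\,2}$ term (this requires $p_{12}\neq 1$, which is the hypothesis) and that the resulting $\sigma$ is still an algebra homomorphism to $M_{2\times 2}(A)$—though the latter is automatic once we know $B$ is a right double extension with these data, by Zhang and Zhang's Lemma~1.7. There is no conceptual difficulty: the isomorphism is the identity on $B$, merely re-coordinatized, so no independent verification of the six relations \eqref{Rel:R31}--\eqref{Rel:R36} is needed.
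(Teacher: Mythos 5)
Your proof is correct and does exactly what the paper implicitly does: the paper states Lemma~\ref{L:newP} without detailed proof, citing only Zhang and Zhang (2008, Remark 1.4) for the observation that it follows by choosing a suitable basis of $Ky_1+Ky_2$, and your computations correctly supply the verification of the relations (\ref{def:DOE1}) and (\ref{def:DOE2}) in the barred variables. One minor tightening: in part~(b) the most direct justification that $\{\overline{y}_1^{\,i}\overline{y}_2^{\,j}\}$ remains a free left $A$-basis is not the total-degree filtration (whose graded layers would themselves require a change-of-basis argument) but the observation that $\overline{y}_1=y_1$ and $\overline{y}_2-y_2\in Ky_1$ make the expansion of $\overline{y}_1^{\,i}\overline{y}_2^{\,j}$ in the old basis unitriangular with respect to $y_2$-degree, since pushing $y_1$ past powers of $y_2$ via (\ref{def:DOE1}) never raises the $y_2$-degree.
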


Let $B=\A{A}$ be a right double extension and suppose
that $p_{12}\ne 1$. Then, as  observed above, by choosing adequate generators $\overline{y}_{i}$
and (possibly) modifying the data $\sigma$, $\delta$, $\tau$, one can assume that $p_{11}=0$.
Now suppose  $\overline{B}={A}_P[\overline{y}_1,\overline{y}_2;\overline{\sigma}, \overline{\delta}, \overline{\tau}]$ is a right
double extension with
$p_{11}=0$. Then  $\overline{B}$ has a natural
filtration, given by setting $\deg A=0$ and $\deg \overline{y}_1=\deg \overline{y}_2=1$. One can
check, in view  of   relations (\ref{def:DOE1}) and (\ref{def:DOE2}), that  the
associated graded algebra $\mathfrak{gr}(B)$ is isomorphic to
$A_P[\overline{y}_1,\overline{y}_2; \overline{\sigma}, 0, \{0,0,0\}]$. The above
shows that the following holds:

\begin{cor}\label{C:grB}
 Suppose that $B=\A{A}$ is a right double extension of $A$, with $p_{12}\ne 1$. Then, there exists a filtration on $B$
 such that the associated graded algebra $D=\mathfrak{ gr}(B)$ can be presented as follows:
 $D$ is generated over $A$ by indeterminates $z_1, z_2$;   it is   free as a left $A$-module with
 basis $\{z_1^{i}z_2^{j}: i, j\geq 0\}$; multiplication in $D$ is given by multiplication in $A$ and the conditions
 $z_2 z_1=p_{12}z_1 z_2$  and $z_1 A+z_2 A\subseteq Az_1+Az_2$, with
 $
 \left[\begin{array}{c} z_1 \\ z_2 \end{array}\right]a=\overline{\sigma}(a)
 \left[\begin{array}{c} z_1 \\ z_2 \end{array}\right]$, where $\overline{\sigma}$ is obtained from $\sigma$ and $q=\frac{p_{11}}{p_{12}-1}$ as in
 Lemma~\ref{L:newP}(b).

 Furthermore, in case $B$ is a double extension, then  $D$ is also free as a right $A$-module with
 basis $\{z_2^{i}z_1^{j}: i, j\geq 0\}$ and $z_1 A+z_2 A = Az_1+Az_2$.
\end{cor}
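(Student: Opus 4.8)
The plan is to reduce to the case $p_{11}=0$ by means of Lemma~\ref{L:newP}(b) and then to exhibit the natural ascending filtration explicitly, reading off the associated graded algebra directly from the defining relations (\ref{def:DOE1}) and (\ref{def:DOE2}). Since $p_{12}\neq 1$, the scalar $q=\frac{p_{11}}{p_{12}-1}$ is well defined, so Lemma~\ref{L:newP}(b) provides an isomorphism $B\simeq\overline B:=A_{\{p_{12},0\}}[\overline y_1,\overline y_2;\overline\sigma,\overline\delta,\overline\tau]$, with $\overline y_1=y_1$, $\overline y_2=y_2+qy_1$ and $\overline\sigma$ as in the statement. I would then work inside $\overline B$, where the defining relations read $\overline y_2\overline y_1=p_{12}\overline y_1\overline y_2+\overline\tau_1\overline y_1+\overline\tau_2\overline y_2+\overline\tau_0$ and $\overline y_i a=\overline\sigma_{i1}(a)\overline y_1+\overline\sigma_{i2}(a)\overline y_2+\overline\delta_i(a)$ for $a\in A$.

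Using that $\overline B$ is free as a left $A$-module on $\{\overline y_1^i\overline y_2^j\}$, I would set $\overline B_n=\bigoplus_{i+j=n}A\,\overline y_1^i\overline y_2^j$ and $F_n=\bigoplus_{m\le n}\overline B_m$, and first check that $F_\bullet$ is an algebra filtration, i.e.\ $F_mF_n\subseteq F_{m+n}$: this follows by moving elements of $A$ to the left with the second relation (which is of degree $\le 1$ on the right) and putting words in $\overline y_1,\overline y_2$ into normal form with the first relation, which --- now that $p_{11}=0$ --- is homogeneous of degree $2$ modulo terms of degree $\le 1$. This is precisely where the reduction to $p_{11}=0$ is needed: with $p_{11}\neq 0$ the term $p_{11}\overline y_1^2$ would survive in degree $2$ and $D$ would not have the stated skew-polynomial form, which is why $p_{12}\neq 1$ is assumed. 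Since the left $A$-basis is graded, $\mathfrak{gr}_n(\overline B)=F_n/F_{n-1}\cong\overline B_n$, so $D=\mathfrak{gr}(\overline B)$ is free as a left $A$-module with basis $\{z_1^iz_2^j\}$, where $z_i$ denotes the class of $\overline y_i$ in $F_1/F_0$; in particular $D$ is generated over $A$ by $z_1,z_2$. Passing the two relations above to $D$, the tail terms $\overline\tau_k$ and the $\overline\delta_i$ terms lie in strictly lower filtration degree and disappear, leaving precisely $z_2z_1=p_{12}z_1z_2$ and $\left[\begin{smallmatrix}z_1\\z_2\end{smallmatrix}\right]a=\overline\sigma(a)\left[\begin{smallmatrix}z_1\\z_2\end{smallmatrix}\right]$ (whence $z_1A+z_2A\subseteq Az_1+Az_2$); together with freeness on $\{z_1^iz_2^j\}$ these rules determine the multiplication, so $D\simeq A_{\{p_{12},0\}}[z_1,z_2;\overline\sigma,0,\{0,0,0\}]$. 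Finally, since $\overline y_2=y_2+qy_1$ is a degree-preserving change of generators, $F_\bullet$ corresponds to the filtration $F_n=\sum_{i+j\le n}Ay_1^iy_2^j$ of $B$ itself, which is the filtration asserted.

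For the additional claim, suppose $B$ is a double extension, so it is free as a right $A$-module on $\{y_2^iy_1^j\}$ and $p_{12}\neq 0$. I would show that $F_\bullet$ is simultaneously $\bigoplus_{i+j\le n}\overline y_2^i\overline y_1^jA$, by verifying $\overline y_2^i\overline y_1^jA\subseteq F_{i+j}$ and $A\,\overline y_1^i\overline y_2^j\subseteq\bigoplus_{k+l\le i+j}\overline y_2^k\overline y_1^lA$ (moving an element of $A$ across the $\overline y_i$ does not raise the degree). Then $\mathfrak{gr}_n(D)\cong\bigoplus_{i+j=n}\overline y_2^i\overline y_1^jA$, so $D$ is free as a right $A$-module with basis $\{z_2^iz_1^j\}$; and since $\mathfrak{gr}_1(D)=Az_1\oplus Az_2=z_1A\oplus z_2A$ (the degree-$1$ part from either decomposition), we conclude $z_1A+z_2A=Az_1+Az_2$.

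The only step requiring real care --- the main obstacle, such as it is --- is verifying that $F_\bullet$ is genuinely an algebra filtration and that the left (and, in the double extension case, right) $A$-basis of $B$ is graded-compatible, so that $\mathfrak{gr}(B)$ does not collapse; everything else is routine bookkeeping with (\ref{def:DOE1}) and (\ref{def:DOE2}).
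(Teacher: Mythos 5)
Your proposal follows the same route as the paper: reduce to $p_{11}=0$ via Lemma~\ref{L:newP}(b), put the natural degree filtration on $\overline{B}$ with $\deg A=0$ and $\deg \overline{y}_1=\deg \overline{y}_2=1$, and read the associated graded off the relations (\ref{def:DOE1}) and (\ref{def:DOE2}). The paper's argument is precisely the paragraph preceding the corollary and is much terser (it leaves the verification to the reader), whereas you spell out that $F_\bullet$ is an algebra filtration, that the left $A$-basis is graded-compatible, and how freeness as a right $A$-module on $\{z_2^iz_1^j\}$ follows in the double-extension case; these details are correct and fill in exactly what the paper implicitly asserts. One small imprecision worth noting: the degree filtration is an algebra filtration even when $p_{11}\neq 0$ (the term $p_{11}\overline{y}_1^2$ is still of degree $2$), so the reduction to $p_{11}=0$ is needed only to give $D$ the stated quantum-plane form $z_2z_1=p_{12}z_1z_2$, not to make $F_\bullet$ multiplicative --- your own sentence eventually says this, but the phrasing initially suggests the filtration itself would fail.
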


 Suppose that $\mathcal{P}$ is a ring-theoretical property which passes from the associated graded algebra $\mathfrak{gr}(C)$
 to the (filtered) algebra $C$.  The above yields that, while investigating the lifting of property $\mathcal{P}$
 from  $A$ to a right double  extension $B$ of $A$, one needs only consider two cases:
 $P=\{1,1\}$ and  $P=\{p_{12},0\}$, with $p_{12}\in K$.

\section{Double extensions as iterated skew polynomial rings}\label{S:ispr}

In general, an iterated Ore extension of the form $\T$ is not a
right double extension.  In spite of this, one can check that if
$$\begin{array}{c}
   \sigma_2(A)\subseteq A, \quad \sigma_2(y_1)=p_{12}y_1+\tau_2, \\
   d_2(A)\subseteq Ay_1+A,\quad
   d_2(y_1)=p_{11}y_1^2+\tau_1y_1+\tau_0
   \end{array}$$
where $p_{ij}\in K$ and $\tau_i\in A$, then the given iterated Ore
extension $\T$ is indeed a right double extension $\A{A}$, with
$P=\{p_{12},p_{11}\}$, $\tau=\{\tau_0,\tau_1,\tau_2\}$,
$ \sigma=
  \left[\begin{array}{cc}
   \si_1   &  0\\
     \si_{21} &  \si _2|_A
  \end{array}\right]
$ and
$\de=
  \left[\begin{array}{c}
    d_1  \\
    \de_2
  \end{array}\right]
$, where $\si_{21}, \de_2  \colon A\rightarrow A$ are defined
by the condition $d_2(a)=\si_{21}(a)y_1+\de_2(a)\in Ay_1+A$, for $a\in A$.

In our next theorem, we give necessary and sufficient conditions for $\A{A}$
to be an iterated Ore extension of the form $\T$. By this we mean that we determine when the identity map on $A$
extends to an algebra isomorphism from $\A{A}$ to $\T$ sending $y_{i}$ to $y_{i}$, $i=1, 2$. To proceed with this, we need the following:

\begin{lemma}\label{commuting}
Let   $\T$ be an iterated Ore extension such that $\si_2(A)\subseteq A$ and $\si_2(y_1)= py_1+q$, for some $p\in K^*$ and $q\in A$.
Then, $\si_1\si_2(a)=\si_2\si_1(a)$, for all $a\in A$.
\end{lemma}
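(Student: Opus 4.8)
The plan is to extract the stated commutation relation by comparing two different ways of computing $y_2 y_1 a$ in the iterated Ore extension $\T$, for an arbitrary $a \in A$. On one side, I would first move $a$ past $y_1$, using $y_1 a = \si_1(a) y_1 + d_1(a)$, and then move the result past $y_2$; on the other side, I would first move $a$ past $y_2$, using $y_2 a = \si_2(a) y_2 + d_2(a)$ together with the hypothesis that $\si_2(A) \subseteq A$, and then past $y_1$. Since $\{y_1^i y_2^j\}$ is a left $A$-basis of $\T$, equating the two expansions and reading off the coefficient of the monomial $y_1 y_2$ (or, more convenient here, a carefully chosen monomial) will produce an identity among $\si_1$, $\si_2$ applied to $a$. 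The key simplifying inputs are that $\si_2$ preserves $A$ (so $\si_2(a) \in A$ and one may then apply $\si_1$ to it) and that $\si_2(y_1) = p y_1 + q$ with $p \in K^*$, which controls how $y_2$ interacts with the $y_1$-variable.

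Concretely, I would instead compute $\si_2(a) y_1$ two ways. Write $b = \si_2(a) \in A$. On one hand, in $\T$ we have $y_1 b = \si_1(b) y_1 + d_1(b)$, so $b y_1$ lives in $Ay_1 + A$ after using the relation $y_1 b = \dots$ — wait, that is the wrong direction. The cleaner route is the following: apply $\si_2$ to the defining relation $y_1 a = \si_1(a) y_1 + d_1(a)$ of the first Ore extension. Since $\si_2$ is an algebra endomorphism of $\T = A[y_1;\si_1,d_1][y_2;\si_2,d_2]$ restricting to an endomorphism of $A$ (by hypothesis $\si_2(A)\subseteq A$) and with $\si_2(y_1) = py_1 + q$, applying $\si_2$ to both sides gives
\[
\si_2(y_1 a) = \si_2(\si_1(a) y_1 + d_1(a)),
\]
that is,
\[
(py_1+q)\,\si_2(a) = \si_2\si_1(a)\,(py_1+q) + \si_2 d_1(a).
\]
Now expand the left-hand side using $y_1 \si_2(a) = \si_1\si_2(a) y_1 + d_1\si_2(a)$ (valid since $\si_2(a)\in A$):
\[
p\,\si_1\si_2(a)\,y_1 + p\, d_1\si_2(a) + q\,\si_2(a)
= p\,\si_2\si_1(a)\, y_1 + q\,\si_2\si_1(a) + \si_2 d_1(a).
\]
Both sides are elements of $Ay_1 + A \subseteq \T$, and $\{1, y_1\}$ is part of a left $A$-basis, so I may equate coefficients of $y_1$: this yields $p\,\si_1\si_2(a) = p\,\si_2\si_1(a)$, and since $p \in K^*$ we conclude $\si_1\si_2(a) = \si_2\si_1(a)$, as desired.

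The only point requiring care — and the step I expect to be the main obstacle — is justifying that $\si_2$, a priori an endomorphism of the whole ring $\T$, is a well-defined algebra endomorphism that can be legitimately applied termwise to the relation $y_1 a = \si_1(a)y_1 + d_1(a)$, and that $\si_2(y_1) = py_1 + q$ is consistent with $\si_2$ being multiplicative on expressions involving $y_1$. This is essentially the content of $\si_2(A) \subseteq A$ together with the fact that $\si_2$ is the endomorphism making the second Ore extension well-defined; one should check that no hidden relation forces a contradiction, but since $\T$ is free as a left $A$-module on $\{y_1^i y_2^j\}$ and $\si_2$ is given as an honest algebra endomorphism of the base ring $A[y_1;\si_1,d_1]$ of the second extension, this is automatic. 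The coefficient comparison itself is routine once this is in place.
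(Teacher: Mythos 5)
Your final argument is correct and is exactly the paper's proof: apply $\si_2$ to $y_1 a = \si_1(a)y_1 + d_1(a)$, expand both sides in $Ay_1 + A$ using $\si_2(y_1)=py_1+q$ and $\si_2(A)\subseteq A$, and compare coefficients of $y_1$ in the free left $A$-basis. (One small wording slip: $\si_2$ is an endomorphism of $A[y_1;\si_1,d_1]$, not of all of $\T$, but you correct this yourself in the final paragraph and it does not affect the argument.)
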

\begin{proof}
Let $a\in A$. Applying $\si_2$ to the equality
$y_1a=\si_1(a)y_1+d_1(a)$, we obtain
$p\si_1\si_2(a)y_1+pd_1\si_2(a)+q\si_2(a)=p\si_2\si_1(a)y_1+\si_2\si_1(a)q +
\si_2d_1(a)$. Since $p\in K^*$, the thesis follows.
\end{proof}


%

\begin{thm}\label{DOasO}
Let $A,B$ be $K$-algebras such that $B$ is an extension of $A$. Assume
$P=\{p_{12},p_{11}\}\subseteq K$,
$\tau=\{\tau_0,\tau_1,\tau_2\}\subseteq A$, $\sigma$ is an algebra
homomorphism from $A$ to $M_{2\times 2}(A)$ and $\delta$ is a
$\sigma$-derivation from $A$ to $M_{2\times 1}(A)$.
\begin{enumerate}
\item The following conditions are equivalent:

\begin{enumerate}
 \item $B=A_P[y_1,y_2;\sigma,\delta,\tau]$ is a right double
 extension of $A$ which can be presented as an iterated
 Ore extension $\T$;
 \item  $B=A_P[y_1,y_2;\sigma,\delta,\tau]$ is
 a right double extension of $A$ with $\sigma_{12}=0$;
 \item  $B=\T$ is an iterated Ore extension such
 that
 $$\begin{array}{c}
   \sigma_2(A)\subseteq A, \quad \sigma_2(y_1)=p_{12}y_1+\tau_2,
   \\
   d_2(A)\subseteq Ay_1+A,\quad
   d_2(y_1)=p_{11}y_1^2+\tau_1y_1+\tau_0,
   \end{array}$$
for some $p_{ij}\in K$ and $\tau_i\in A$.
The maps $\sigma$, $\delta$, $\si_i$ and $\de_i$, $i=1,2$, are related by:
$ \sigma= \left[\begin{array}{cc}
   \si_1   &  0\\
   \si_{21} &  \si _2|_A
  \end{array}\right]
$, $\de(a)= \left[\begin{array}{c}
    d_1(a)  \\
d_2(a)-\sigma_{21}(a)y_1
  \end{array}\right]
$, for all $a\in A$.
\end{enumerate}

\item If any of the statements from (a)   holds, then  $B$ is a double
extension of $A$ if and only if $\sigma_1=\sigma_{11}$ and $\sigma_2|_A=\sigma_{22}$
are
automorphisms of $A$ and $p_{12}\neq 0$.
\end{enumerate}
\end{thm}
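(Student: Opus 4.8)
The plan is to establish the cycle (a)(i) $\Rightarrow$ (a)(ii) $\Rightarrow$ (a)(iii) $\Rightarrow$ (a)(i), and then treat part (b) separately. For (a)(i) $\Rightarrow$ (a)(ii), I would start from an isomorphism $\phi\colon\A{A}\to\T$ fixing $A$ and sending $y_i\mapsto y_i$. In $\T$ one computes, for $a\in A$, the expansion of $y_1 a$ and $y_2 a$ using the defining relations of the two Ore extensions; comparing left-$A$-coefficients against the relation \eqref{def:DOE2} for $\A{A}$ forces $\sigma_{12}(a)$ to be the coefficient of $y_2$ in $y_1 a$ computed in $\T$. But in $\T$ the element $y_1 a = \sigma_1(a)y_1 + d_1(a)$ lies in $Ay_1 + A$, so its $y_2$-coefficient is $0$; hence $\sigma_{12}=0$. (Here I would use that $\{y_1^iy_2^j\}$ is a left $A$-basis of both rings, which is part of the double-extension hypothesis and a standard fact for iterated Ore extensions, so coefficient comparison is legitimate.)

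For (a)(ii) $\Rightarrow$ (a)(iii), assume $B=\A{A}$ is a right double extension with $\sigma_{12}=0$. Then \eqref{def:DOE2} reads $y_1 a = \sigma_{11}(a)y_1 + \delta_1(a)$ and $y_2 a = \sigma_{21}(a)y_1 + \sigma_{22}(a)y_2 + \delta_2(a)$. The first relation, together with the fact (noted in the excerpt, right after the definition of $\sigma,\delta$) that $\sigma_{11}$ is then an algebra endomorphism and $\delta_1$ a $\sigma_{11}$-derivation, lets me form $A_1:=A[y_1;\sigma_{11},\delta_1]$, which embeds in $B$ with left $A$-basis $\{y_1^i\}$ and is in fact all of $\sum_i Ay_1^i$. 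Next I must show $y_2$ normalizes $A_1$ in the Ore-extension sense: I need an algebra endomorphism $\sigma_2$ of $A_1$ and a $\sigma_2$-derivation $d_2$ with $y_2 c = \sigma_2(c)y_2 + d_2(c)$ for all $c\in A_1$. Define $\sigma_2$ on $A$ by $\sigma_{22}$, on $y_1$ by $p_{12}y_1+\tau_2$, and $d_2$ on $A$ by $d_2(a)=\sigma_{21}(a)y_1+\delta_2(a)$, on $y_1$ by $p_{11}y_1^2+\tau_1 y_1+\tau_0$; these are exactly dictated by \eqref{def:DOE1} and the $\sigma_{12}=0$ version of \eqref{def:DOE2}. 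The content is that these prescriptions are consistent, i.e. respect the relation $y_1 a=\sigma_{11}(a)y_1+\delta_1(a)$ of $A_1$: one checks $\sigma_2$ is multiplicative and $d_2$ is a $\sigma_2$-derivation by verifying compatibility on the generators $a\in A$ and $y_1$. This is where the six relations \eqref{Rel:R31}--\eqref{Rel:R36} of Proposition~\ref{Zhang} enter — they are precisely the identities (with $\sigma_{12}=0$) needed for this consistency, so rather than recompute I would invoke Proposition~\ref{Zhang} to know they hold and then observe that the cocycle conditions for $(\sigma_2,d_2)$ on $A_1$ are equivalent to (subsets of) those relations. Having $A_1[y_2;\sigma_2,d_2]$, the natural surjection onto $B$ is an isomorphism by the usual left-$A$-basis comparison $\{y_1^iy_2^j\}$. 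Finally (a)(iii) $\Rightarrow$ (a)(i) is immediate: the displayed conditions are exactly the hypotheses of the remark preceding the theorem, which already says such a $\T$ is the right double extension $\A{A}$ with the stated $\sigma,\delta$; and conversely the identity on $A$ extends to the isomorphism by construction.

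For part (b): assume (a) holds, so $B=\T=A[y_1;\sigma_{11},\delta_1][y_2;\sigma_2,d_2]$ with $\sigma_1=\sigma_{11}$, $\sigma_2|_A=\sigma_{22}$. Recall from the Ore-extension discussion at the start of Section~\ref{S:doe} that an Ore extension $R[x;\sigma,d]$ is free as a \emph{right} $R$-module on $\{x^i\}$ iff $\sigma$ is an automorphism of $R$. Apply this twice: $B$ is free as a right $A$-module on $\{y_2^iy_1^j\}$ iff $\sigma_2$ is an automorphism of $A_1=A[y_1;\sigma_{11},\delta_1]$ and $\sigma_{11}$ is an automorphism of $A$. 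Now $\sigma_2$ being an automorphism of $A_1$ is equivalent to: $\sigma_{22}=\sigma_2|_A$ is an automorphism of $A$ and $\sigma_2$ sends $y_1$ to a left $A$-basis-respecting generator, i.e. $p_{12}\in K^*$ (using $\sigma_2(y_1)=p_{12}y_1+\tau_2$); one direction is clear, and for the converse I would note that if $\sigma_{22}$ is bijective and $p_{12}\neq 0$ then one can explicitly write down an inverse endomorphism of $A_1$. Combining, $B$ is a double extension — which by definition also requires $p_{12}\neq 0$ and the right-hand basis/overlap conditions — iff $\sigma_{11}$ and $\sigma_{22}$ are automorphisms of $A$ and $p_{12}\neq 0$; the overlap condition (a)(iii) of the definition of double extension, $y_1A+y_2A+A=Ay_1+Ay_2+A$, follows automatically from the right-module freeness via the same criterion quoted for single Ore extensions. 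The main obstacle is the bookkeeping in (a)(ii) $\Rightarrow$ (a)(iii): matching the abstract cocycle conditions for $(\sigma_2,d_2)$ on $A_1$ with the correct subset of relations \eqref{Rel:R31}--\eqref{Rel:R36} (those surviving when $\sigma_{12}=0$), and making sure the $d_2$-on-$y_1$ value $p_{11}y_1^2+\tau_1y_1+\tau_0$ is compatible with $d_1$; everything else is routine basis comparison or a direct appeal to results already in the excerpt.
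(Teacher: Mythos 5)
Your cycle for part (a) has the right shape, and the two easy legs, $(a)(i)\Rightarrow(a)(ii)$ and $(a)(iii)\Rightarrow(a)(i)$, match what the paper does: in the first case the point is simply that $y_{1}A\subseteq Ay_{1}+A$ holds in any $\T$ and, for $B=\A{A}$, that inclusion is equivalent to $\sigma_{12}=0$; in the second case one quotes the observation at the start of Section~\ref{S:ispr}.

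Where you diverge substantially is $(a)(ii)\Rightarrow(a)(iii)$. You propose to \emph{build} $A_{1}[y_{2};\sigma_{2},d_{2}]$ from scratch, prescribing $\sigma_{2}$ and $d_{2}$ on $A$ and on $y_{1}$, and then to verify that these prescriptions respect the relation of $A_{1}$ by matching the cocycle conditions against the Zhang--Zhang relations (\ref{Rel:R31})--(\ref{Rel:R36}). That program could be made to work, but it is exactly the heavy bookkeeping the paper avoids. The paper's argument is considerably lighter: from $\sigma_{12}=0$ one first identifies the subalgebra $A[y_{1}]\subseteq B$ with $A_{1}=A[y_{1};\sigma_{11},\delta_{1}]$; then, since $B$ is by hypothesis a free left $A$-module on $\{y_{1}^{i}y_{2}^{j}\}$, it is automatically a free left $A_{1}$-module on $\{y_{2}^{i}\}$, and relations (\ref{def:DOE1})--(\ref{def:DOE2}) give $y_{2}A_{1}\subseteq A_{1}y_{2}+A_{1}$. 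At this point the Cohn-type remark at the very start of Section~\ref{S:doe} applies: a free left $A_{1}$-module extension with basis $\{y_{2}^{i}\}$ and $y_{2}A_{1}\subseteq A_{1}y_{2}+A_{1}$ \emph{is} an Ore extension $A_{1}[y_{2};\sigma_{2},d_{2}]$, with $\sigma_{2}$ and $d_{2}$ well-defined for free. No consistency check is needed; the explicit formulas for $\sigma_{2},d_{2}$ then drop out of the two displayed relations. You should rework this leg along those lines rather than re-deriving well-definedness.

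Part (b) contains a genuine gap in the forward direction. You assert that ``$B$ is free as a right $A$-module on $\{y_{2}^{i}y_{1}^{j}\}$ iff $\sigma_{2}$ is an automorphism of $A_{1}$ and $\sigma_{11}$ is an automorphism of $A$,'' but only the ``if'' half is routine. The ``only if'' half is not: from right $A$-freeness of $B$ on $\{y_{2}^{i}y_{1}^{j}\}$ it does not immediately follow that $A_{1}$ is free as a right $A$-module on $\{y_{1}^{j}\}$, because one does not yet know $A_{1}=\sum_{j}y_{1}^{j}A$ (that equality is the right Ore condition for $A_{1}$, which is precisely what one is trying to establish). The paper sidesteps this by a different mechanism: it invokes Zhang--Zhang's result that $\det(\sigma)$ is an automorphism of $A$ for a double extension, notes that $\sigma_{12}=0$ reduces $\det(\sigma)$ to $\sigma_{22}\sigma_{11}$, and then uses Lemma~\ref{commuting} (applied via $(a)(iii)$) to conclude that $\sigma_{11}$ and $\sigma_{22}$ commute, whence both are automorphisms. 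You do not mention Lemma~\ref{commuting} or the determinant at all, and without one of these ingredients your forward direction does not close. Your converse in (b) is fine and agrees with the paper.
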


\begin{proof}
Notice that, if  $B=A_P[y_1,y_2;\sigma,\delta,\tau]$, then $y_1A\subseteq Ay_1+A$ if and only if
$\sigma_{12}=0$. Therefore $(a)(i)\Rightarrow (a)(ii)$.

Suppose now that $(a)(ii)$ holds.  Then  $y_1A\subseteq Ay_1+A$. Hence,
every element of the subalgebra $A[y_1]$ of $B$ can be written in the
form $a_ny_1^n+\ldots + a_0$, for suitable $n\geq 0$ and $a_i\in A$. Since
$B$ is a free left $A$-module with basis
$\{y_1^i y_2^j\}_{i, j=0}^\infty$, the elements
$\{y_1^i \}_{i=0}^\infty$ are $A$-independent, i.e., $A[y_1]$ is a
free left $A$-module with that basis. Multiplication in $A[y_1]$ is given by
multiplication in $A$ and the condition $y_1a=\si_1(a)y_1 +
d_1(a)$, where $\si_1=\si_{11}$ and $d_1=\de_1$. Thus,
$A[y_1]=A[y_1;\si_1,d_1]$.

Since $B$ is a right double extension of $A$, $B$ is a free left
$A[y_1;\sigma_1, d_1]$-module with basis $\{y_2^i \}_{i=0}^\infty$.
Relation (\ref{def:DOE1}) can be re-written as
\begin{equation}
y_2y_1=(p_{12}y_1+\tau_2)y_2+p_{11}y_1^2+\tau_1 y_1+\tau_0
\label{DOE3}
\end{equation}
and, by (\ref{def:DOE2}), we also have:
\begin{equation}
y_2A\subseteq Ay_2+(Ay_1+A).\label{DOE4}
\end{equation}
Thus, by the above,
\begin{equation}
y_2A[y_1]\subseteq A[y_1]y_2+A[y_1]\label{DOE5}.
\end{equation}
This means that $B=\T$, for some endomorphism $\sigma_{2}$ and some
$\sigma_{2}$-derivation $d_{2}$ of $A[y_1;\si_1,d_1]$.
Conditions (\ref{DOE3}) and (\ref{DOE4})
imply that $\sigma_2(y_1)=p_{12}y_1+\tau_2$,  $
\sigma_2(A)\subseteq A$, $ d_2(A)\subseteq Ay_1+A$ and
$d_2(y_1)=p_{11}y_1^2+\tau_1y_1+\tau_0$. By (\ref{def:DOE2}),
$$y_2a=\sigma_{22}(a)y_2+\sigma_{21}(a)y_1+\delta_2(a),\;\; \mbox{for all}\;\; a\in
A.$$ Thus, we also have   $\sigma_2(a)=\sigma_{22}(a)$ and
$d_2(a)=\sigma_{21}(a)y_1+\delta_2(a)$, for all $a\in A$. Hence
$(a)(iii)$ holds.

As observed at the beginning of this section,
$(a)(iii)\Rightarrow(a)(i)$ holds, and the proof of $(a)$ is
completed.

Assume now that $B$ is a double extension, with $\sigma_{12}=0$.
Then, by definition, $p_{12}\neq 0$. J.J.~Zhang and J.~Zhang
introduced the determinant of $\si$,  $\det(\si)\colon
A\rightarrow A$,  by setting
$\det(\si)=-p_{11}\si_{12}\si_{11}+\si_{22}\si_{11}-p_{12}\si_{12}\si_{21}$,
and showed  (cf. Zhang and Zhang (2008,~Lemma 1.9 and Proposition
2.1(a)(b))) that $\det(\si)$ is an automorphism of $A$, provided
that $B$ is a double   extension of $A$. As $\si_{12}=0$, this
implies that $\det(\si)=\si_{22}\si_{11}$ is invertible in
End$_K(A)$. Notice that Lemma \ref{commuting} and (a)(iii) above
yield that $\sigma_{11}$ and $\sigma_{22}$ commute. Therefore,
both $\sigma_{11}$ and $\sigma_{22}$ are automorphisms of $A$.

Conversely, suppose that $\sigma_1=\sigma_{11}$, $\sigma_2|_A=\sigma_{22}$
are automorphism of $A$ and  $p_{12}\neq 0$. Since $\sigma_2(y_1)=p_{12}y_1+\tau_2$, this implies that
$\si_2$ is an automorphism of $A[y_1;\si_1,d_1]$. Hence,
$\{y_2^{i}y_1^{j}: i, j\geq 0\}$ is a basis of $B=\T$
as a right $A$-module, i.e., $B$ is a double extension of $A$. This
completes the proof of $(b)$.
\end{proof}

The following  lemma gives a necessary and sufficient condition
for the matrix corresponding to $\sigma$  to be triangularizable,
by choosing adequate generators of $A_P[y_1,y_2;\sigma,\delta,\tau]$ from $Ky_1+Ky_2$, i.e., it gives a necessary
condition for a right double extension $\A{A}$ to be presented as
an iterated Ore extension over $A$.

\begin{lemma}\label{diagonalizavel}
Let $B=A_P[y_1,y_2;\sigma,\delta,\tau]$ be a right double
extension, $k,l\in K$ and $0\ne z=ky_1+ly_2\in B$. Then:
\begin{center}
$zA\subseteq Az+A\qquad\mbox{iff}\qquad
kl\sigma_{11}+l^2\sigma_{21}=kl\sigma_{22}+k^2\sigma_{12}.$
\end{center}
\end{lemma}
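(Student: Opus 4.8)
The plan is to compute directly how $z = ky_1 + ly_2$ commutes past an element $a \in A$, using the defining relation~(\ref{def:DOE2}) for $\sigma$ and $\delta$, and then to read off exactly when the result lies in $Az + A$. Writing out $za$ from
$\left[\begin{smallmatrix} y_1 \\ y_2 \end{smallmatrix}\right]a = \sigma(a)\left[\begin{smallmatrix} y_1 \\ y_2 \end{smallmatrix}\right] + \delta(a)$,
we get
$za = \big(k\sigma_{11}(a) + l\sigma_{21}(a)\big)y_1 + \big(k\sigma_{12}(a) + l\sigma_{22}(a)\big)y_2 + \big(k\delta_1(a) + l\delta_2(a)\big).$
Since $B$ is free as a left $A$-module with basis $\{y_1^i y_2^j\}$, the elements $1, y_1, y_2$ are part of this basis and hence left $A$-independent. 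So $za$ lies in $Az + A = A(ky_1+ly_2) + A$ precisely when the coefficient vector $\big(k\sigma_{11}(a) + l\sigma_{21}(a),\, k\sigma_{12}(a) + l\sigma_{22}(a)\big)$ of $(y_1, y_2)$ is an $A$-multiple of $(k, l)$; the $A$-part is then free to be absorbed into the trailing $A$ summand and imposes no further constraint.

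Next I would turn the condition ``$(k\sigma_{11}(a) + l\sigma_{21}(a),\, k\sigma_{12}(a) + l\sigma_{22}(a)) \in A\cdot(k,l)$ for all $a$'' into the stated scalar identity. Since $z \neq 0$, at least one of $k, l$ is nonzero. If both are nonzero, being a left $A$-multiple of $(k,l)$ is equivalent to the two entries being ``proportional'' in the appropriate sense: multiplying the first entry by $l$ and the second by $k$ must agree, i.e. $l\big(k\sigma_{11}(a)+l\sigma_{21}(a)\big) = k\big(k\sigma_{12}(a)+l\sigma_{22}(a)\big)$, which rearranges to $\big(kl\sigma_{11} + l^2\sigma_{21}\big)(a) = \big(kl\sigma_{22} + k^2\sigma_{12}\big)(a)$. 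If, say, $l = 0$ and $k \neq 0$, then $z = ky_1$, the condition $za \in Az + A$ becomes $k\sigma_{12}(a) = 0$, i.e. $\sigma_{12} = 0$, and the asserted identity degenerates to $k^2\sigma_{12} = 0$, matching; the case $k = 0$ is symmetric. So in all cases the condition is equivalent to $kl\sigma_{11} + l^2\sigma_{21} = kl\sigma_{22} + k^2\sigma_{12}$ as maps $A \to A$.

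I expect no serious obstacle here; the only point requiring a little care is the degenerate case where one of $k, l$ vanishes, to confirm the single scalar equation still captures the condition correctly, and the observation that the $\delta$-term contributes nothing because it already lands in $A$. The essential input is the left $A$-freeness of $B$ on the monomial basis, which guarantees $\{1, y_1, y_2\}$ are left $A$-independent, so that ``lies in $Az + A$'' can be checked coefficient-wise.
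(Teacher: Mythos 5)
Your proof is correct and follows essentially the same route as the paper: compute $za$ via the defining relation, then read off the membership condition $za \in Az + A$ by comparing the coefficients of $y_1$ and $y_2$ against $(k,l)$, treating the degenerate cases $k=0$ or $l=0$ separately. The paper's version is terser (it writes the coefficients as $(\sigma_{11}(a)+\tfrac{l}{k}\sigma_{21}(a))ky_1 + (\tfrac{k}{l}\sigma_{12}(a)+\sigma_{22}(a))ly_2 + \cdots$ and leaves the remaining cross-multiplication to the reader), but the underlying argument is identical.
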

\begin{proof} If either $k=0$ or $l=0$, then the identity above
reduces to  $\si_{21}=0$ or $\si_{12}=0$, accordingly. This
gives the thesis in this case.

Suppose $k,l\in K^*$.  Let $a\in A$. One can compute that
\begin{center}
$za=\left(\sigma_{11}(a)+\frac{l}{k}\sigma_{21}(a)\right)ky_1+\left(\frac{k}{l}\sigma_{12}(a)+\sigma_{22}(a)\right)ly_2+k\delta_1(a)+l\delta_2(a).$
\end{center}
This yields the thesis.
\end{proof}

Suppose that  the right double extension $B=\A{A}$ can be  presented as an iterated Ore
extension of the form $A[y_2,\si_2',d_2'][y_1;\si_1',d_1']$.  Then, we must have
$\sigma_{21}=0$, as $ y_2A\subseteq Ay_2+A$.  Notice also that
$p_{12}$ has to be nonzero, as otherwise the quadratic relation
 (\ref{def:DOE1})  would imply that $\{ y_{1}^{i}\}_{i=0}^\infty$ is not a free basis of $B$ as a left
$A[y_2;\si_2',d_2']$-module. Now, relation (\ref{def:DOE1}) together with
$y_1y_2\in A[y_2,\si_2',d_2']y_1+A[y_2,\si_2',d_2']$ imply that
$p_{11}=0$.   In this case, the quadratic relation
(\ref{def:DOE1}) becomes
\begin{equation}
y_2y_1=p_{12}y_1y_2+\tau_1y_1+\tau_2y_2+\tau_0. \label{sym rel}
\end{equation}
Observe that, in any right double extension $B$ satisfying
relation (\ref{sym rel}), the set $\{y_2^{i}y_1^{j}: i, j\geq 0\}$ still
forms a basis of $B$ as a left $A$-module. The
fact that this relation  is left-right symmetric
implies that there is an isomorphism
$$B\simeq A_{\{p_{12}^{-1},0\}}\left[y_2,y_1;\left[\begin{array}{cc} \sigma_{22} &
 \sigma_{21}\\ \sigma_{12} & \sigma_{11}\end{array}\right],
 \left[\begin{array}{c} \delta_{2}\\ \delta_{1}\end{array}\right],
 \{-p_{12}^{-1}\tau_0,-p_{12}^{-1}\tau_1,-p_{12}^{-1}\tau_2\}\right]$$
realized by interchanging the roles of $y_1$ and $y_2$.

The  remarks above, together with Theorem \ref{DOasO}, yield the
following (cf. Zhang and Zhang (2009, Proposition 3.6)):

\begin{thm}\label{y2y1 version}
Let $B=A_P[y_1,y_2;\sigma,\delta,\tau]$ be a right double extension of the $K$-algebra $A$, where
$P=\{p_{12},p_{11}\}\subseteq K$,
$\tau=\{\tau_0,\tau_1,\tau_2\}\subseteq A$, $\sigma \colon
A\rightarrow M_{2\times 2}(A)$ is an algebra
homomorphism  and
$\delta\colon A \rightarrow M_{2\times 1}(A)$ is a
$\sigma$-derivation.
Then, $B$ can be presented as an iterated Ore extension
$A[y_2;\si_2',d_2'][y_1;\si_1',d_1']$ if and only if $\sigma_{21}=0$,
 $p_{12}\ne 0$ and $p_{11}=0$.
 In this case, $B$ is a double
extension if and only if $\sigma_2'=\sigma_{22}$ and
$\sigma_1'|_A=\sigma_{11}$ are automorphisms of $A$.
\end{thm}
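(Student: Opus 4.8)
The plan is to derive Theorem~\ref{y2y1 version} by transporting Theorem~\ref{DOasO} through the ``flip'' isomorphism that interchanges $y_1$ and $y_2$, which is exactly the $y_2y_1$-versus-$y_1y_2$ analogue of what Theorem~\ref{DOasO} does for presentations of the form $\T$. First I would observe that the existence of any presentation $B = A[y_2;\si_2',d_2'][y_1;\si_1',d_1']$ forces three necessary conditions, and I would verify each in turn from the defining relations. From $y_2 A \subseteq A y_2 + A$ (which holds in $A[y_2;\si_2',d_2']$ and is inherited in $B$) and relation~\eqref{def:DOE2} one reads off $\sigma_{21}=0$. That $p_{12}\neq 0$ follows because otherwise relation~\eqref{def:DOE1} rewrites $y_2 y_1$ without any $y_1 y_2$ term, so in $A[y_2;\si_2',d_2'][y_1;\si_1',d_1']$ the element $y_2 y_1$ would equal something of $y_1$-degree $\leq 1$, contradicting freeness of $\{y_1^j\}$ over $A[y_2;\si_2',d_2']$; alternatively one invokes the general fact quoted in Section~\ref{S:doe} that a skew polynomial ring is free as a \emph{right} module iff the endomorphism is an automorphism. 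Finally $p_{11}=0$: the relation $y_1 y_2 \in A[y_2;\si_2',d_2']\,y_1 + A[y_2;\si_2',d_2']$ together with~\eqref{def:DOE1} (which, solving for $y_1 y_2$, introduces a $p_{11}y_1^2$ term unless $p_{11}=0$) gives a contradiction with $A$-freeness on the $y_1^2$ coefficient. This is precisely the chain of ``remarks above'' the theorem in the excerpt, so I would simply reference it rather than re-derive it.

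For the converse and for the full strength of the statement, the key step is to apply the left--right symmetry. Assuming $\sigma_{21}=0$, $p_{12}\neq 0$ and $p_{11}=0$, relation~\eqref{def:DOE1} collapses to the symmetric form~\eqref{sym rel}, and the excerpt already records the resulting isomorphism
\[
B \;\simeq\; A_{\{p_{12}^{-1},0\}}\!\left[y_2,y_1;\left[\begin{array}{cc} \sigma_{22} & \sigma_{21}\\ \sigma_{12} & \sigma_{11}\end{array}\right], \left[\begin{array}{c} \delta_{2}\\ \delta_{1}\end{array}\right], \{-p_{12}^{-1}\tau_0,-p_{12}^{-1}\tau_1,-p_{12}^{-1}\tau_2\}\right]
\]
obtained by swapping $y_1 \leftrightarrow y_2$. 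Call this right double extension $B'$; its $\sigma$-matrix has upper-right entry $\sigma_{21}$, which is $0$ by hypothesis. Hence $B'$ satisfies condition (a)(ii) of Theorem~\ref{DOasO} (with the roles of the indices $1,2$ exchanged), so by the equivalence (a)(ii)$\Rightarrow$(a)(i) of that theorem, $B'$ is an iterated Ore extension of the form $A[y_2;\si_2',d_2'][y_1;\si_1',d_1']$. Pulling this back through the swap isomorphism yields the desired presentation of $B$ itself, which finishes the ``if and only if''.

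For the last sentence of the theorem — the double-extension criterion — I would again feed $B'$ into Theorem~\ref{DOasO}(b): since $B$ is a double extension if and only if $B'$ is (the flip isomorphism is a $K$-algebra isomorphism fixing $A$, and being a double extension is intrinsic), and $B'$ has $\sigma$-matrix with entries $\sigma_{22},\sigma_{21},\sigma_{12},\sigma_{11}$ in the positions $(1,1),(1,2),(2,1),(2,2)$, Theorem~\ref{DOasO}(b) says $B'$ is a double extension iff its ``$\sigma_{11}$'' and ``$\sigma_{22}$'' — namely $\sigma_{22}$ and $\sigma_{11}$ — are automorphisms of $A$ and its parameter $p_{12}^{-1}$ is nonzero (automatic since $p_{12}\neq 0$). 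Matching these against $\si_2'=\si_{22}$ and $\si_1'|_A = \si_{11}$ (the identification coming from the $\T$-description in Theorem~\ref{DOasO}(a)(iii), transported by the flip) gives exactly the stated condition.

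The main obstacle I anticipate is purely bookkeeping: making sure the index-swap is applied consistently, so that what plays the role of ``$\sigma_{12}=0$'' in Theorem~\ref{DOasO} for $B'$ really is the hypothesis $\sigma_{21}=0$ for $B$, and that the automorphism conditions come out attached to the correct $\si_i'$. There is no genuinely new computation — the quadratic-relation analysis giving $p_{11}=0$ and $p_{12}\neq 0$ is elementary, and everything else is a transcription of Theorem~\ref{DOasO} under the involution $y_1\leftrightarrow y_2$ — so the proof is essentially: cite the preceding remarks for necessity, and cite Theorem~\ref{DOasO} applied to the flipped extension for sufficiency and for the double-extension criterion.
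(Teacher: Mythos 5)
Your proposal is correct and follows the paper's own route: establish $\sigma_{21}=0$, $p_{12}\neq 0$, $p_{11}=0$ as necessary from the freeness/containment conditions of the presentation $A[y_2;\si_2',d_2'][y_1;\si_1',d_1']$ (exactly the chain of remarks preceding the theorem), then pass to the flipped right double extension $B'$ via the recorded isomorphism and apply Theorem~\ref{DOasO}(a)(ii)$\Rightarrow$(a)(i) for sufficiency and Theorem~\ref{DOasO}(b) for the double-extension criterion. The only spot worth tightening is the aside that ``being a double extension is intrinsic'': the definition refers explicitly to the ordered monomials $\{y_1^iy_2^j\}$ (left basis) and $\{y_2^iy_1^j\}$ (right basis), and swapping $y_1\leftrightarrow y_2$ exchanges these two roles, so the equivalence ``$B$ double $\Leftrightarrow$ $B'$ double'' is not literally by definition; it does hold here because, under relation~\eqref{sym rel} with $p_{12}\in K^*$, the $A$-change-of-basis between the two monomial families is triangular with invertible diagonal on either side, so one is a left (resp.\ right) $A$-basis iff the other is. The paper is equally terse at this point (``The remarks above, together with Theorem~\ref{DOasO}, yield the following''), so this is a clarification rather than a gap.
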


Let $\mathcal{P}$ denote one of the following ring-theoretical
properties: being left (right) noetherian, being a domain, being
prime, being semiprime left (right) noetherian, being semiprime
left (right) Goldie. It is   known that $\mathcal{P}$ lifts from a
ring $R$ to an Ore extension $R[x; \si, d]$, provided that $\si$
is an automorphism of $R$ (cf. Lam (1997), Matczuk (1995),
McConnel and Robson (2001)). Thus, Theorems \ref{DOasO} and
\ref{y2y1 version} yield the following partial positive answers to
some of the questions posed in Zhang and Zhang (2008):

\begin{cor}\label{cornoetherian} Suppose that the $K$-algebra $A$
possesses the property $\mathcal{P}$. Then, the double extension
  $B=A_P[y_1,y_2;\sigma,\delta,\tau]$ also has the property
  $\mathcal{P}$, provided
that either $\sigma_{12}=0$, or $\sigma_{21}=0$ and
$p_{11}=0$.
\end{cor}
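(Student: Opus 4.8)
The plan is to reduce the corollary directly to the two theorems just proved, invoking the standard lifting results for Ore extensions that are cited in the paragraph preceding the statement. The key observation is that in both cases allowed by the hypothesis, the conclusions of Theorem~\ref{DOasO} and Theorem~\ref{y2y1 version} tell us not merely that $B$ is an iterated Ore extension, but that the relevant endomorphisms are in fact \emph{automorphisms}, which is exactly the condition under which $\mathcal{P}$ is known to lift.

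First I would treat the case $\sigma_{12}=0$. Since $B$ is assumed to be a double extension, part~(b) of Theorem~\ref{DOasO} applies: it says that $\sigma_1=\sigma_{11}$ and $\sigma_2|_A=\sigma_{22}$ are automorphisms of $A$ and $p_{12}\neq 0$, and part~(a) of that theorem gives $B=A[y_1;\sigma_1,d_1][y_2;\sigma_2,d_2]$. Because $\sigma_1$ is an automorphism of $A$, the property $\mathcal{P}$ lifts from $A$ to the first Ore extension $A_1:=A[y_1;\sigma_1,d_1]$. Then I need that $\sigma_2$ is an automorphism of $A_1$, not just that $\sigma_2|_A$ is an automorphism of $A$; this follows because $\sigma_2(A)\subseteq A$, $\sigma_2$ restricts to an automorphism of $A$, and $\sigma_2(y_1)=p_{12}y_1+\tau_2$ with $p_{12}\in K^{*}$ and $\tau_2\in A$, so $\sigma_2$ maps the generating set $A\cup\{y_1\}$ bijectively onto a generating set and is invertible. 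Applying the lifting result a second time, $\mathcal{P}$ passes from $A_1$ to $A_1[y_2;\sigma_2,d_2]=B$.

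Next I would treat the case $\sigma_{21}=0$ and $p_{11}=0$. Here I first note that, since $B$ is a double extension, $p_{12}\neq 0$ by definition, so the hypotheses $\sigma_{21}=0$, $p_{12}\neq 0$, $p_{11}=0$ of Theorem~\ref{y2y1 version} are all met, and that theorem gives $B=A[y_2;\sigma_2',d_2'][y_1;\sigma_1',d_1']$ together with the fact (from its last sentence, applicable because $B$ is a double extension) that $\sigma_2'=\sigma_{22}$ and $\sigma_1'|_A=\sigma_{11}$ are automorphisms of $A$. Exactly as before, $\mathcal{P}$ lifts from $A$ to $A[y_2;\sigma_2',d_2']$ since $\sigma_2'$ is an automorphism of $A$, and $\sigma_1'$ is an automorphism of $A[y_2;\sigma_2',d_2']$ because it fixes $A$ up to the automorphism $\sigma_{11}$ and sends $y_2$ to $p_{12}^{-1}y_2 + \text{(lower terms)}$ with $p_{12}^{-1}\in K^{*}$; a second application of the lifting result yields $\mathcal{P}$ for $B$.

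The proof is therefore essentially a bookkeeping argument, and there is no serious obstacle: all the structural content is already in Theorems~\ref{DOasO} and~\ref{y2y1 version}. The one point that requires a little care — and the only place where a short argument beyond quoting the theorems is needed — is verifying that the \emph{second} of the two nested endomorphisms (namely $\sigma_2$ acting on $A[y_1;\sigma_1,d_1]$, respectively $\sigma_1'$ acting on $A[y_2;\sigma_2',d_2']$) is genuinely an automorphism of the larger ring, which I would handle by the generating-set bijectivity argument sketched above, using that the leading coefficient $p_{12}$ (resp.\ $p_{12}^{-1}$) is a unit of $K$. Everything else is a direct citation of the known lifting theorems for Ore extensions over a ring equipped with an automorphism.
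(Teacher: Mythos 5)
Your proof is correct and is exactly the argument the paper intends: the paper offers no explicit proof of the corollary, simply stating that it follows from Theorems~\ref{DOasO} and \ref{y2y1 version} together with the cited lifting results, and your writeup supplies precisely the missing bookkeeping. In particular you correctly use the ``double extension'' hypothesis to extract from Theorem~\ref{DOasO}(b) (resp.\ the last sentence of Theorem~\ref{y2y1 version}) that $\sigma_{11}$ and $\sigma_{22}$ are automorphisms of $A$, and you rightly flag that the only point needing a short extra argument is that the outer endomorphism ($\sigma_2$, resp.\ $\sigma_1'$) is an automorphism of the intermediate Ore extension --- a fact the paper itself asserts in the course of proving Theorem~\ref{DOasO}(b), using $p_{12}\in K^*$ just as you do.
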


In Zhang and Zhang (2008), the authors asked whether primeness
(resp.\ semiprimeness) lifts from an algebra  $A$ to its double
extension $\A{A}$.  It is known that, in general, semiprimeness
does not lift from $A$ to an Ore extension
$A[y_1;\si_1,0]=A[y_1;\si_1]$, even if $\sigma_1$ is an
automorphism.   For such a non-semiprime extension  we know, by
Theorem \ref{DOasO}, that  $A[y_1;\sigma_1][y_2]$ is a double
extension, which is clearly not semiprime.  For a specific
example, one can take $A=\prod_{i\in \Z} K_i$, where $K_i=K$ is a
copy of the base field and $\sigma_1$ is the ``right shifting''
automorphism of $A$.  Then $ay_1A[y_1;\sigma_1]ay_1=0$, for
$a=(a_{i})$ with $a_0=1$ and $a_i=0$ if $i\neq 0$, i.e.,
$A[y_1;\sigma_1]$ is not semiprime. On the other hand,
semiprimeness does lift from the algebra $A$ to an Ore extension
$A[y_1;\si_1]$, provided that $\si_{1}$ is an automorphism and $A$
is noetherian. The problem of determining whether semiprimeness
lifts from A to $\A{A}$ when $A$ is noetherian still remains open.

One of the examples of a double extension which  appeared in Zhang
and Zhang (2008) is the following:

\begin{ex} Let $A=K[x]$ and fix
$a,b,c\in K$ with $b\neq 0$. Let $\sigma \colon
A\longrightarrow M_{2\times 2}(A)$ be the  algebra homomorphism given by
$\sigma(x)=\left[\begin{array}{cc} 0 & b^{-1}x\\ bx &
0\end{array}\right]$ and $\de\colon A\rightarrow M_{2\times
1}(A)$ be  the $\si$-derivation determined by the condition
  $\delta(x)=\left[\begin{array}{c} cx^2\\
-bcx^2\end{array}\right]$. Then, the double extension
$B^2(a,b,c)=A_{\{-1,0\}}[y_1,y_2;\sigma,\delta,\{0,0,ax^2\}]$
exists and it is the $K$-algebra generated by $x,y_1, y_2$, subject
to the relations:
\begin{equation} \label{aaaa}
y_2y_1=-y_1y_2+ax^2,\;\;\;y_1x=b^{-1}xy_2+cx^2,\;\;\;y_2x=bxy_1-bcx^2.
\end{equation}
\end{ex}

It was stated in Zhang and Zhang (2008) that if $a\ne 0$, then the
algebra $B^2(a,b,c)$ cannot be presented as an iterated Ore
extension over $K[x]$. The following proposition shows that this
is not so, in case the characteristic of the base field $K$ is
$2$.

\begin{prop} Let $a,b,c\in K$ with $b\neq 0$. The algebra $B^2=B^2(a,b,c)$ has the following
properties:
\begin{enumerate}
\item Suppose that $\mathrm{char}(K)=2$. Then $B^2$ is the differential operator algebra
$B^2=K[x,z][y_2; d]$, where $d$ is the derivation of $K[x,z]$
determined by $d(x)=xz-bcx^2$ and $d(z)=abx^2$. In particular, $B^2$ can be
presented as an iterated Ore extension over $A=K[x]$.
\item $B^2$ is a noetherian domain.
\end{enumerate}
\end{prop}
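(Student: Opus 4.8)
The plan for part (1) is to produce an explicit change of variable exploiting the fact that, when $\mathrm{char}(K)=2$, the parameter $p_{12}=-1$ of $B^2$ equals $1$, so the quadratic relation becomes left--right symmetric. I would set $z:=by_1-y_2\in B^2$ (so also $z=by_1+y_2$, as $\mathrm{char}(K)=2$), whence $y_1=b^{-1}(z+y_2)$. First I would introduce the \emph{abstract} differential operator ring $C:=K[x,z][y_2;d]$, where $d$ is the derivation of the polynomial ring $K[x,z]$ determined by $d(x)=xz-bcx^2$ and $d(z)=abx^2$; such a $d$ exists and is unique because a derivation of a polynomial ring is freely prescribed on the generators, and this already exhibits $C$ as a two-step iterated Ore extension $K[x][z;\mathrm{id},0][y_2;d]$ over $A=K[x]$. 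Next I would check that $x\mapsto x$, $z\mapsto by_1-y_2$, $y_2\mapsto y_2$ respects the three defining relations of $C$ (namely $xz=zx$, $y_2x=xy_2+xz-bcx^2$, $y_2z=zy_2+abx^2$) and so defines an algebra map $\phi\colon C\to B^2$, while $x\mapsto x$, $y_1\mapsto b^{-1}(z+y_2)$, $y_2\mapsto y_2$ respects the relations~(\ref{aaaa}) and defines $\psi\colon B^2\to C$. Both verifications are direct computations from~(\ref{aaaa}), with the characteristic-$2$ cancellations ($-1=1$, $2y_2^2=0$) being exactly what makes things close up. Since $\phi\psi$ and $\psi\phi$ fix the respective generating sets (here one uses $b\ne 0$), $\phi$ and $\psi$ are mutually inverse and $B^2\cong C$, which is statement (1).

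For part (2), in characteristic $2$ this is immediate from (1): $B^2\cong K[x][z;\mathrm{id},0][y_2;d]$ is an iterated Ore extension of the field $K$ in which every step is an Ore extension (the last a differential operator ring), so $B^2$ is a Noetherian domain by the standard lifting results quoted above. For arbitrary characteristic I would argue via the normal element $x$. From~(\ref{aaaa}) one reads $y_ix\in xB^2$, and solving the same relations for $xy_i$ gives $xy_i\in B^2x$, so $xB^2=B^2x$; more precisely, $x\mapsto x$, $y_1\mapsto b^{-1}y_2+cx$, $y_2\mapsto by_1-bcx$ extends to an involutive automorphism $\phi_0$ of $B^2$ with $rx=x\phi_0(r)$ for all $r$. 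Left multiplication by $x$ is injective since $B^2$ is a free left $K[x]$-module (it is a double extension), and right multiplication by $x$ is injective because $rx=0$ forces $x\phi_0(r)=0$, hence $\phi_0(r)=0$, hence $r=0$; thus $x$ is a regular normal element. Modding out, $B^2/xB^2$ is, by Proposition~\ref{rem1}, an iterated Ore extension $K[\bar y_1][\bar y_2;\sigma]$ with $\sigma(\bar y_1)=-\bar y_1$ (the commutative polynomial ring when $\mathrm{char}(K)=2$), hence a Noetherian domain. Since $B^2$ is a finitely generated $\mathbb{N}$-graded algebra with $x$ homogeneous of positive degree, the $x$-adic filtration is separated and its associated graded ring is the skew polynomial ring $(B^2/xB^2)[\bar x;\bar\phi_0]$, a Noetherian domain; so $B^2$ is a Noetherian domain. (When $\mathrm{char}(K)\ne 2$ one can instead invoke Corollary~\ref{C:grB}: the associated graded of $B^2$ for its natural filtration is the algebra $D$ with relations $z_2z_1=-z_1z_2$, $z_1x=b^{-1}xz_2$, $z_2x=bxz_1$, which one recognises as the iterated Ore extension $K[z_1][z_2;\sigma_0][x;\tau]$ with automorphisms $\sigma_0(z_1)=-z_1$ and $\tau(z_1)=b^{-1}z_2$, $\tau(z_2)=bz_1$; then $D=\mathfrak{gr}(B^2)$ is a Noetherian domain and the property transfers to $B^2$.)

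The hard part will be part (2) in characteristic different from $2$: as the discussion preceding the proposition indicates, when $a\ne 0$ the algebra $B^2$ is \emph{not} an iterated Ore extension over $K[x]$, so the Ore-extension lifting theorems cannot be applied to $B^2$ directly and the argument must be routed through either the quotient $B^2/xB^2$ or the graded algebra $D$. The two delicate points are then verifying that $x$ is a nonzerodivisor --- for which the involution $\phi_0$ (equivalently, the normality of $x$ in $D$) is the clean tool --- and justifying the transfer of Noetherianity through the $x$-adic (respectively, the natural) filtration, which is legitimate here because $B^2$ is a finitely generated connected $\mathbb{N}$-graded algebra, so these filtrations are separated and discrete on each finite-dimensional graded component. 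Part (1), by contrast, is a routine check once the substitution $z=by_1-y_2$ is made; the only thing to notice is that $\mathrm{char}(K)=2$ forces $p_{12}=-1$ to coincide with $1$, the degenerate value excluded from Lemma~\ref{L:newP}.
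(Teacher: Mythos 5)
Your part (a) is essentially the paper's argument with the isomorphism written out more explicitly: you make the same substitution $z=by_1+y_2$ (the key step), and where the paper routes through Theorem~\ref{DOasO} after identifying $B^2$ as the right double extension $A_{\{-1,0\}}[z,y_2;\sigma',\delta',\{0,0,abx^2\}]$, you instead construct mutually inverse algebra maps $\phi\colon K[x,z][y_2;d]\to B^2$ and $\psi\colon B^2\to K[x,z][y_2;d]$ directly from the defining relations. Both are correct; yours avoids invoking the double-extension machinery, the paper's makes the connection to Theorem~\ref{DOasO} explicit, which is part of the point of the section.

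For part (b), you diverge from the paper in your primary line of argument and this is where some care is needed. The paper works with the \emph{degree filtration} of Corollary~\ref{C:grB} (where $A=K[x]$ sits in degree $0$ and $y_1,y_2$ have degree $1$), identifies $\mathfrak{gr}(B^2)$ as the iterated Ore extension $K[z_1][z_2;\sigma_1][x;\sigma_2]$, and then lifts the noetherian-domain property through a filtration whose filtered pieces are finite-dimensional; this transfer is completely standard. You instead propose to use the $x$-adic filtration coming from the regular normal element $x$ (the automorphism $\phi_0$ and its involutivity are correctly verified, and $B^2/xB^2$ is indeed the noetherian domain $K[\bar y_1][\bar y_2;\sigma]$). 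The gap is in the final transfer step: lifting noetherianity through the $x$-adic filtration of a regular normal element is \emph{not} the standard theorem, and the separatedness of the filtration, which is all you explicitly claim, is not by itself sufficient. One typically needs completeness (false here) or a Rees-ring/graded argument that you would have to spell out, e.g.\ by showing that for any graded left ideal $I$ one can control $I\cap x^n B^2$; as written, the chain of reasoning has a hole. You do, parenthetically, offer the paper's route via Corollary~\ref{C:grB} as an alternative; that alternative is in fact the clean and correct argument and should be promoted to the primary one, since it works uniformly in all characteristics and avoids the $x$-adic subtleties entirely. (Note also that the paper's argument already covers $\mathrm{char}(K)=2$, so the case split you begin part (b) with is unnecessary.)
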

\begin{proof}
 Suppose that $\mathrm{char}(K)=2$. Let us take $z=by_1+y_2$. One can check, using (\ref{aaaa}), that $zx=xz$. In particular, $zA=Az$.
  Notice also that
$zA+y_2A+A=Az+Ay_2+A$, as $y_1A+y_2A+A=Ay_1+Ay_2+A$. Taking $y_2$
as the second generator of $B^2$ in the double extension, we have:
\begin{equation}\label{E:b2}
y_2z=-zy_2+2y_2^2+abx^2=zy_2+abx^2,
\end{equation}
given that $\mathrm{char}(K)=2$.
The above identity implies that $B^2$ can be also presented as a double
extension $B^2=A_{\{-1,0\}}[z,y_2;\sigma',\delta',\{0,0,abx^2\}]$,
for suitable maps $\si'$ and $\de'$. The condition $zA\subseteq
Az+A$ means that $\si'_{12}=0$. Thus, Theorem \ref{DOasO} implies
that $B^2$ is an iterated Ore extension over $A=K[x]$. In fact, using (\ref{aaaa}) and the characteristic of $K$,
one can check  that
\begin{equation}\label{a4}
y_2x=-xy_2+xz-bcx^2=xy_2+xz-bcx^2.
\end{equation}
Then, the identities $xz=zx$, (\ref{E:b2}) and (\ref{a4}) imply
that $B^2=K[x,z][y_2; d]$, where  $d$ is a derivation of $K[x,z]$ as
described above, i.e., $d(x)=xz-bcx^2$ and $d(z)=abx^2$. This proves (a).

For(b), let $\mathrm{char}(K)$ be arbitrary. Notice that, as $p_{11}=0$,  the algebra $B^2$ is filtered, as described in the paragraph preceding
Corollary~\ref{C:grB}. The associated
graded algebra, $\mathfrak{gr}(B^2)$, is generated by $x$, $z_{1}$ and $z_{2}$, subject to the
relations (cf.~(\ref{aaaa})):
\begin{align*}
z_2 z_1 &=-z_1 z_2;\\
z_1 x &=b^{-1}x z_2;\\
z_2 x &=bxz_1.
\end{align*}
Thus, $\mathfrak{gr}(B^2)$ is the iterated Ore extension
$$K[z_1][z_2; \sigma_1][x;\sigma_2],$$
where $\sigma_1(z_1)=-z_1$, $\sigma_2(z_2)=bz_1$,
$\sigma_2(z_1)=b^{-1}z_2$. Therefore, $\mathfrak{gr}(B^2)$ is a
noetherian domain, which implies that so is $B^2$.
\end{proof}

\section*{Acknowledgments}

 The two first
named authors were   supported  by \emph{Funda\c{c}\~ao para a
Ci\^encia e Tecnologia} (FCT),  through the \emph{Centro de
Matem\'atica da Universidade do Porto} (CMUP). The third named
author was   supported by Polish MNiSW grant No. N N201 268435.
The authors are also grateful for financial support from
GRICES/Ministry of Science and Higher Education of Poland, under
the project ``Rings with additional structures''.



\end{document}